\theoremstyle{plain}
\newtheorem{theorem}{Theorem}[section]
\newtheorem{lemma}[theorem]{Lemma}
\newtheorem{prob}[theorem]{Problem}
\newtheorem{proposition}[theorem]{Proposition}
\newtheorem{corollary}[theorem]{Corollary}
\theoremstyle{definition}
\newtheorem{definition}[theorem]{Definition}
\newtheorem{example}[theorem]{Example}
\newtheorem{remark}[theorem]{Remark}
\numberwithin{equation}{section}
\newcommand{\bbb}{\mathsf{b}}
\newcommand{\sslash}{\mathbin{/\mkern-6mu/}}
\newcommand{\Gr}{\mathrm{Gr}}
\newcommand{\Flags}{\mathit{F\ell}}
\newcommand{\inc}{\ensuremath{\mathrm{Inc}}}
\newcommand{\nc}{\mathcal{W}}
\newcommand{\rstab}{\mathcal{J}}
\newcommand{\rstabgen}{\overline{\mathcal{J}}}
\newcommand{\rspoly}{\mathbf{J}}
\newcommand{\inv}{\mathrm{inv}}
\newcommand{\sgn}{\mathrm{sgn}}
\newcommand{\setpart}{\Pi}
\newcommand{\SL}{\mathrm{SL}}
\newcommand{\fsl}{\mathfrak{sl}}
\definecolor{darkblue}{rgb}{0.0,0,0.7}
\newcommand{\newword}[1]{\textcolor{darkblue}{\textbf{\emph{#1}}}}
\title[Web invariants for noncrossing partitions]{A web basis of invariant polynomials \linebreak from noncrossing partitions}
\author{Rebecca Patrias}
\address[RP]{\parbox{\linewidth}{Department of Mathematics, University of St.\ Thomas, St.\ Paul, MN, 55105, USA}}
\email{\parbox[t]{\linewidth}{rebecca.patrias@stthomas.edu}}
\author{Oliver Pechenik}
\address[OP]{\parbox{\linewidth}{Department of Combinatorics \& Optimization, University of Waterloo, Waterloo, ON, N2L 3G1, Canada}}
\email{\parbox[t]{\linewidth}{oliver.pechenik@uwaterloo.ca}}
\author{Jessica Striker}
\address[JS]{\parbox{\linewidth}{Department of Mathematics, North Dakota State University, Fargo, ND, 58102, USA}}
\email{\parbox[t]{\linewidth}{jessica.striker@ndsu.edu}}
\begin{document}
\maketitle

\begin{abstract}
The irreducible representations of symmetric groups can be realized as certain graded pieces of invariant rings, equivalently as global sections of line bundles on partial flag varieties. There are various ways to choose useful bases of such Specht modules $S^\lambda$. Particularly powerful are web bases, which make important connections with cluster algebras and quantum link invariants. Unfortunately, web bases are only known in very special cases---essentially, only the cases $\lambda=(d,d)$ and $\lambda=(d,d,d)$. Building on work of B.~Rhoades (2017), we construct an apparent web basis of invariant polynomials for the $2$-parameter family of Specht modules with $\lambda$ of the form $(d,d,1^\ell)$. The planar diagrams that appear are noncrossing set partitions, and we thereby obtain geometric interpretations of earlier enumerative results in combinatorial dynamics.
\end{abstract}

\section{Introduction}

\emph{Specht modules} $S^\lambda$, indexed by integer partitions, are the irreducible complex representations of symmetric groups $S_n$. Unsurprisingly, there are many different ways to construct and describe Specht modules. These various constructions yield distinct linear bases of $S^\lambda$ that are variously well adapted to one task or another. Understanding the resulting basis changes often leads to  deep and hard problems \cite{Kazhdan.Lusztig:1,Kazhdan.Lusztig:2,Elias.Williamson,Khovanov.Kuperberg,Russell.Tymoczko:sl2,Rhoades:polytabloid,Russell.Tymoczko:sl3,Im.Zhu,Hwang.Jang.Oh}.

An important construction of the Specht module is as a space of invariant polynomials for a Lie group of block lower-triangular matrices, or equivalently as global sections of a line bundle on a \emph{partial flag variety}. In this avatar, \emph{standard monomial theory} endows each $S^\lambda$ with a natural basis of polynomials, encoded by \emph{standard Young tableaux} of shape $\lambda$. For textbook treatment of this classical material, see, e.g., \cite[Chapters 8 \& 9]{Fulton:YoungTableaux}, \cite[Chapter 14]{Miller.Sturmfels}, \cite[Chapter 10]{Lakshmibai.Raghavan}, or \cite[Chapters 4, 6, \& 12]{Lakshmibai.Brown}.

In special cases, this realization of $S^\lambda$ has another remarkable basis, consisting of a different set of invariant polynomials, encoded by planar diagrams called \emph{webs}. Webs were introduced by G.~Kuperberg \cite{Kuperberg}, building on the diagrammatics of the Temperley--Lieb algebra \cite{Temperley.Lieb}. The web bases have applications to representations of \emph{quantum groups} (e.g., \cite{Kuperberg,Khovanov.Kuperberg}, \emph{cluster algebras} (e.g., \cite{Fomin.Pylyavskyy,Fraser}), the geometry of \emph{Springer fibres} (e.g., \cite{Mazorchuk.Stroppel,Stroppel.Webster}), and \emph{quantum link invariants} (e.g., \cite{Jones,Khovanov}). As a combinatorial application, web bases were used in \cite{Petersen.Pylyavskyy.Rhoades} to elucidate the orbit structure of certain standard Young tableaux under \emph{promotion}, a instance of the \emph{cyclic sieving phenomenon} \cite{Reiner.Stanton.White} previously established through more difficult tools in \cite{Rhoades:thesis}.

While definitions have been proposed for $\fsl_r$ webs with $r\geq 3$ \cite{Kim,Morrison,Westbury,Fontaine}, the resulting constructions are not entirely satisfactory from our perspective, as they lack important combinatorial properties. In particular, web bases have only been successfully applied to tableau combinatorics in the $2$- and $3$-row rectangular cases $\lambda = (d,d)$ and $\lambda=(d,d,d)$.

The goal of this paper is to develop a web basis for the two-parameter family of Specht modules $S^\lambda$ with $\lambda$ of the form $(d,d,1^\ell)$. (Here, $1^\ell$ is shorthand for $\ell$ parts each of size $1$.) We call these partitions \newword{pennants} after the visual appearance of their Young diagrams (cf.~Figure~\ref{fig:pennant}). We were guided to the study of pennant Specht modules by considerations in the combinatorics of tableau dynamics; however, this family of partitions has attracted combinatorial interest since work of R.~Stanley \cite{Stanley:pennant} in the mid-1990s, based on unpublished observations of K.~O'Hara and A.~Zelevinsky (see also \cite[$\mathsection 9$]{Armstrong.Reiner.Rhoades} for relations to cluster theory). We find that pennant Specht modules have a useful web basis, directly extending the Temperley--Lieb basis for the case $\ell = 0$.

\begin{figure}[ht]
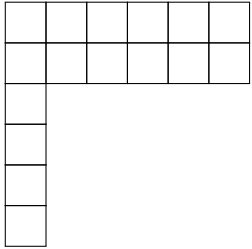

\ydiagram{6,6,1,1,1,1}
\caption{The Young diagram of the pennant partition $\lambda = (6,6,1^4)$.}\label{fig:pennant}
\end{figure}

Standard Young tableaux of this shape $(d,d,1^\ell)$ are in bijection with \emph{increasing tableaux} of shape $(d+\ell,d+\ell)$ with entries at most $2d+\ell$~\cite{Pechenik}.
Increasing tableaux are analogues of standard Young tableaux, useful in the study of $K$-theoretic Schubert calculus. In \cite{Thomas.Yong:K}, H.~Thomas and A.~Yong introduced a \emph{$K$-jeu de taquin} through which increasing tableaux calculate the $K$-theory structure coefficients of \emph{Grassmannians} and other \emph{minuscule varieties}  \cite{Clifford.Thomas.Yong,Buch.Samuel,Pechenik.Yong:KT}. The combinatorics of $K$-jeu de taquin has since found combinatorial applications in the study of \emph{plane partitions} \cite{Dilks.Pechenik.Striker,Vorland,Hamaker.Patrias.Pechenik.Williams,Patrias.Pechenik} and longest strictly increasing subsequences of words \cite{Thomas.Yong:Plancherel}.

Analogous to the cyclic sieving theorems of \cite{Rhoades:thesis,Petersen.Pylyavskyy.Rhoades} for standard Young tableaux, O.~Pechenik \cite{Pechenik} gave a cyclic sieving to describe the orbit structure of $2$-row increasing tableaux under a $K$-jeu de taquin analogue of promotion. The $\fsl_2$ webs relevant to promotion of $2$-row \emph{standard Young tableaux} may be combinatorially identified with \emph{noncrossing matchings}, special cases of \emph{noncrossing set partitions} (see \cite{Stanley:Catalan} for enumerative exploration of these combinatorial objects). The cyclic sieving of \cite{Pechenik} relied on the combinatorics of noncrossing set partitions without singleton blocks, leading Pechenik to write ``it is tempting to think of noncrossing partitions without singletons as `$K$-webs' for $\fsl_2$, although their representation-theoretic significance is unknown.'' However, the proof in \cite{Pechenik} was by explicit calculation, rather than representation-theoretic arguments.

In \cite{Rhoades:skein}, B.~Rhoades provided representation-theoretic meaning to noncrossing partitions without singletons by combinatorially reconstructing the Specht module $S^{(d,d,1^\ell)}$ with these combinatorial diagrams as an apparently new ``skein'' basis.  Thereby, Rhoades established the first algebraic proofs and interpretations of some theorems from \cite{Reiner.Stanton.White} and \cite{Pechenik}. Nonetheless, the results remained somewhat mysterious, since Rhoades' definitions were highly non-obvious and the verifications that they gave a module structure involved many pages of laborious calculations. Moreover, \cite[$\mathsection$7]{Rhoades:skein} noted an incompatibility of signs between different constructions in the paper; specifically, the signs appearing in the $S_n$ action on general set partitions are slightly different from those used for \emph{almost noncrossing set partitions}.

Our main result is to realize Rhoades' skein basis (up to some changes in sign) in a geometrically-natural setting. More precisely, we associate to each noncrossing set partition without singletons a global section $[\pi]$ of a line bundle on the $2$-step partial flag variety $\Flags(2,\ell+2;n)$, yielding a basis of the pennant Specht module $S^{(d,d,1^\ell)}$ that is equivalent to Rhoades' skein basis up to signs. The polynomial $[\pi]$ will be defined in Section~\ref{sec:main} in terms of \emph{jellyfish tableaux}, which we introduce (building on ideas we learned from \cite{Reiner.Shimozono}).

\begin{theorem}\label{thm:main}
The set $\{ [\pi] : \pi \in \nc(n,d) \}$ 
of invariants of noncrossing partitions of $n$ with $d$ blocks and no singletons forms a basis for 
$S^{\lambda}$ where $\lambda = (d,d,1^{n-2d-2})$. 
Moreover, up to predictable signs, the long cycle $c = n12 \dots (n-1) \in S_n$ acts by rotation of diagrams and the long element $w_0 = n(n-1) \dots 21 \in S_n$ acts by reflection. Specifically, we have
\[
w_0 \cdot [\pi] = (-1)^{\binom{n}{2}} [\mathtt{refl}(\pi)] \text{ and } c \cdot [\pi] = (-1)^{n-1} [\mathtt{rot}(\pi)].
\]
\end{theorem}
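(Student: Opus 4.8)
The plan is to prove the three assertions in turn. That $\{[\pi] : \pi \in \nc(n,d)\}$ is a basis of $S^\lambda$, and the reflection identity for $w_0$, should be comparatively routine; the rotation identity for the long cycle $c$ is the crux.

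\emph{The basis.} Under the realization of $S^\lambda$ via global sections of a line bundle on $\Flags(2,\ell+2;n)$, standard monomial theory supplies the classical basis of standard monomials: for each standard Young tableau of shape $\lambda = (d,d,1^\ell)$, take the product over columns of a Pl\"{u}cker coordinate whose size equals the column length (one coordinate of size $\ell+2$ and $d-1$ of size $2$). I would first check that each $[\pi]$, as assembled from jellyfish tableaux in Section~\ref{sec:main}, has the correct degree and equivariance to lie in $S^\lambda$; this should be immediate from the construction. I would then fix the term order on monomials in Pl\"{u}cker coordinates under which standard monomial theory straightens, and show that the leading term of $[\pi]$ is the standard monomial of a standard Young tableau $T(\pi)$, with $\pi \mapsto T(\pi)$ a bijection from $\nc(n,d)$ onto the standard Young tableaux of shape $\lambda$. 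Since these two sets are equinumerous (both being counted by the increasing tableaux of \cite{Pechenik}), the change-of-basis matrix against the standard monomial basis is unitriangular, so $\{[\pi]\}$ is a basis. A parallel route, closer to the paper's stated aim, is to compare $[\pi]$ directly with the skein basis element of \cite{Rhoades:skein}: the jellyfish description should make the two visibly proportional up to an explicit sign, whereupon the basis property is inherited.

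\emph{Reflection.} The element $w_0$ acts on invariant polynomials by reversing the column labels $i \mapsto n+1-i$ of the ambient space $\C^n$. Reversing the labels inside a Pl\"{u}cker coordinate of size $k$ introduces the sign $(-1)^{\binom{k}{2}}$; bookkeeping the sizes and multiplicities of the coordinates occurring in $[\pi]$, together with the convention fixing the $S_n$-action, collapses the total sign to $(-1)^{\binom{n}{2}}$. It then remains to observe that reversing column labels carries each jellyfish tableau for $\pi$ to one for the mirror image $\mathtt{refl}(\pi)$, which is a direct check on the associated planar diagrams. This gives $w_0 \cdot [\pi] = (-1)^{\binom{n}{2}} [\mathtt{refl}(\pi)]$.

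\emph{Rotation.} Here I expect the real work to lie. The obstacle is that neither the standard monomial basis nor the jellyfish formula for $[\pi]$ is manifestly covariant under $c = n12\cdots(n-1)$: applying $c$ and re-sorting column labels produces cyclically shifted minors whose label sets typically violate the noncrossing condition, so one must re-expand via Pl\"{u}cker (straightening) relations and then contend with an intricate pattern of signs and cancellations. My preferred line of attack is to work with a version of the invariant defined for \emph{all} set partitions of $[n]$, not only the noncrossing ones, where both the assignment and the $S_n$-action should be uniform; verify rotation-covariance up to the sign $(-1)^{n-1}$ at that level; and then restrict, using that $\mathtt{rot}$ preserves noncrossingness. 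I would carry out this verification by an induction that removes one arc of a diagram at a time, invoking the appropriate skein or Pl\"{u}cker relation at each step, with the case $\ell = 0$ recovering the classical rotation-invariance of the Temperley--Lieb/$\fsl_2$ web basis as the base case. As a complementary consistency check, one can transport Rhoades' rotation theorem through the proportionality between $[\pi]$ and Rhoades' basis element; doing so forces one to pin down exactly where singleton blocks enter the jellyfish tableaux, and thereby to resolve the sign discrepancy recorded in Section~7 of \cite{Rhoades:skein}. In all of these approaches the genuinely delicate point is the sign accounting under straightening, and that is the step I expect to dominate the argument.
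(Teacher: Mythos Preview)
You have inverted where the difficulty lies. You treat membership of $[\pi]$ in $S^\lambda$ as ``immediate from the construction'' and regard the rotation identity as ``the crux''; in the paper it is precisely the opposite.

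\textbf{Membership is not immediate.} The jellyfish polynomial $\rspoly(T)$ is a product of minors $M_{R_i(T)}^{\pi_i}$ whose row sets $R_i(T)$ are \emph{not} top-justified (the paper says this explicitly just before Definition~\ref{def:rsdet}), so the summands of $[\pi]$ are not products of Pl\"ucker variables and there is no a priori reason they lie in the $P$-invariant ring, let alone in the correct graded piece. The paper's route is to first prove the five-term recurrence (Theorem~\ref{thm:5term}) by a fairly technical induction on $n$ with careful sign bookkeeping (Lemmas~\ref{lem:row_col_swap} and~\ref{lem:sgn_ell}), and then use that recurrence in Lemma~\ref{lemma:invariants_pennant} to inductively rewrite any $[\pi]$ in terms of invariants of set partitions with $d-1$ blocks of size $2$, which \emph{are} manifestly products of Pl\"ucker variables and hence lie in $S^\lambda$. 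Your proposal contains no mechanism for this step.

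\textbf{Rotation and reflection are both one-liners.} Once $[\pi]$ is defined for all set partitions (which the paper does, and which you correctly suggest), the paper proves in Proposition~\ref{prop:any_perm} that $w\cdot[\pi]=\sgn(w)\,[w\cdot\pi]$ for \emph{every} $w\in S_n$, simply by checking it on simple transpositions $s_i$: if $i,i+1$ lie in different blocks, swapping them flips exactly one inversion in each jellyfish tableau; if they lie in the same block, swapping them transposes two adjacent columns of one determinant. No straightening, no Pl\"ucker relations, no induction on arcs. Both the rotation and reflection identities then drop out as the specializations $w=c$ and $w=w_0$ (Corollary~\ref{prop:rotation}). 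Your elaborate plan for rotation---re-sorting labels, re-expanding via straightening, arc-removal induction---is unnecessary, and your separate sign calculation for $w_0$ is subsumed by the same uniform statement.

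Your leading-term argument for linear independence is essentially what the paper does in Lemma~\ref{lemma:independent} (distinct leading monomials under a suitable lex order), so that part is fine.
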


Our construction directly extends that of the $\fsl_2$ web basis for the Specht module $S^{(d,d)}$. Our construction partially realizes the dream from \cite{Pechenik} of interpreting noncrossing partitions without singletons as ``$K$-webs'' for $\fsl_2$. However, they turn out to be webs, not for $\fsl_2$, but rather for a block lower-triangular Lie subalgebra of $\fsl_{\ell+2}$, and any connection to $K$-theoretic geometry is not yet apparent. An asset of our construction is that we actually obtain an invariant polynomial $[\pi] \in S^\lambda$ for each set partition $\pi$ (not necessarily noncrossing) with the actions of the permutations $c$ and $w_0$ still given as in Theorem~\ref{thm:main}. With this construction, it moreover becomes straightforward to obtain ``uncrossing rules,'' describing the expansion of $[\pi]$ for crossing $\pi$ in the noncrossing basis.

\begin{remark} 
A different algebraic realization of the skein basis in a space of \emph{fermionic diagonal harmonics} appeared recently in work of Rhoades with J.~Kim \cite{Kim.Rhoades}. While our work was carried out independently, our construction as a space of invariant polynomials appears to be a concretization of their more abstract theory. In particular, their \emph{block operators} appear to correspond to our determinants and they satisfy a $5$-term relation analogous to our Theorem~\ref{thm:5term}. It would likely be valuable to work out the details of this correspondence.
\end{remark}

To our knowledge, Theorem~\ref{thm:main} is the first development of web bases for any family of Lie algebras of arbitrarily-high dimension. We have yet to fully explore the consequences of this idea. Our first applications are to understanding enumerative questions in combinatorial dynamics. However, it seems reasonable to expect that there is a quantum group whose representation theory is also governed by this diagrammatic basis. We do not expect this theory to yield a new quantum link invariant; however, it could perhaps be applied to obtain quantum invariants for spatial embeddings of hypergraphs. Similarly, we hope that our constructions may yield insights into the topology of the Springer fiber for shape $(d,d,1^\ell)$. The dynamics of $K$-promotion on arbitrary rectangular increasing tableaux are mysterious but known to be very complicated \cite{Pechenik,Patrias.Pechenik}. However, the $3$-row rectangular case is believed to be tractable \cite[Conjecture~4.12]{Dilks.Pechenik.Striker}. Preliminary investigations with Julianna Tymoczko suggest that this case is also amenable to study via a web basis of planar hypergraphs.

This paper is organized as follows. In Section~\ref{sec:background}, we recall necessary background material on Specht modules, partial flag varieties, and tableau combinatorics. In particular,  Subsection~\ref{sec:grassmannian} outlines the Grassmannian case and the construction of bases of the Specht module $S^{(d,d)}$ in terms of tableaux and noncrossing matchings, which our main result specializes to in the case $\ell=0$. Subsection~\ref{sec:flagvarieties} extends this  to partial flag varieties and constructs a polynomial basis indexed by tableaux of the  Specht module $S^{\lambda}$; this includes our case of interest, $\lambda = (d,d,1^{\ell})$.  In Section~\ref{sec:main}, we introduce jellyfish tableaux and invariant polynomials for set partitions and prove our main theorems. Section~\ref{sec:final} contains some final remarks on cyclic sieving, uncrossing rules, and open problems.

\section{Background}\label{sec:background}

\subsection{Partitions and Young diagrams}
A \newword{partition} $\lambda = (\lambda_1 \geq \lambda_2 \geq \dots \geq \lambda_r)$ is a finite weakly decreasing sequence of positive integers. The number of terms in this sequence is the \newword{length} $r$ of the partition. We will use shorthands for partitions of the form $(4^2,1^3) = (4,4,1,1,1)$. (This notation can be justified by thinking of the exponents as denoting products in the free monoid on the alphabet of positive integers, rather than ordinary products of numbers.)

We commonly draw a partition $\lambda$ by placing $\lambda_i$ left-justified boxes in row $i$, where we number our rows from top to bottom. (In other words, we will be using ``English'' conventions and matrix coordinates.) This box diagram is called the \newword{Young diagram} of $\lambda$; however, we generally conflate $\lambda$ with its Young diagram. For example, if $\lambda = (4,4,1^3)$, its Young diagram is $\ytableausetup{smalltableaux} \ydiagram{4,4,1,1,1}$. \ytableausetup{nosmalltableaux}

\subsection{Grassmannians and webs}
\label{sec:grassmannian}
The complex \newword{Grassmannian} $\Gr_k(n)$ is the parameter space for $k$-dimensional linear subspaces of $\mathbb{C}^n$. Our focus in this paper is on generalizing constructions for $\Gr_2(n)$, so we begin by reviewing these. In the next subsection, we will describe more general constructions for partial flag varieties, but we think it is valuable to first fix ideas and notation in the Grassmannian case.  Textbook references for this subsection and the next include \cite[Chapters 8 \& 9]{Fulton:YoungTableaux}, \cite[Chapter 14]{Miller.Sturmfels}, \cite[Chapter 10]{Lakshmibai.Raghavan}, and \cite[Chapters 4, 6, \& 12]{Lakshmibai.Brown}.

Let $M_n$ denote the matrix $\begin{pmatrix}
x_{1} & x_{2} & \dots & x_{n}\\
y_{1} & y_{2} & \dots & y_{n}
\end{pmatrix}$ of $2n$ distinct indeterminates and let $\SL_2(\mathbb{C})$ act by left multiplication. This gives an action of $\SL_2(\mathbb{C})$ on the ring $\mathbb{C}[M_n]$ of polynomials in these $2n$ variables, which we think of as the coordinate ring of a $2n$-dimensional affine space $\mathbb{C}^{2n}$. A classical task in invariant theory is to characterize those polynomials in the invariant subspace $\mathbb{C}[M_n]^{\SL_2}$ of this action. Classically, the answer is that $\mathbb{C}[M_n]^{\SL_2}$ is generated as an algebra by the $2 \times 2$ minors of $M_n$. 

By definition, $\mathbb{C}[M_n]^{\SL_2}$ is the coordinate ring of the GIT (geometry invariant theory) quotient $\mathbb{C}^{2n} \sslash \SL_2$, which is the affine cone over $\Gr_2(n)$. A $2 \times 2$ minor of $M_n$ is specified by a pair of column indices $1 \leq i < j \leq n$. Commonly, we identify the $(i<j)$--minor  
$\begin{vmatrix}x_i & x_j\\ y_i & y_j\end{vmatrix}$
 of $M_n$ with the \newword{Pl\"ucker variable} $p_{ij}$ on the complex projective space $\mathbb{P}^{\binom{n}{2}-1}$. Realizing a $2$-dimensional linear subspace of $\mathbb{C}^n$ as a rank $2$ complex matrix of shape $2 \times n$, we may evaluate all of its $2 \times 2$ minors and map it to the point in $\mathbb{P}^{\binom{n}{2}-1}$ with those Pl\"ucker coordinates. This construction is well-defined, since different matrix representations of the same $2$-plane will yield the same set of $2 \times 2$ minors, up to global scaling by a nonzero constant. We thereby realize the Grassmannian $\Gr_2(n)$ as a smooth projective subvariety of $\mathbb{P}^{\binom{n}{2}-1}$, embedded via this \newword{Pl\"ucker embedding}. The Grassmannian, in its Pl\"ucker embeddding, is cut out scheme-theoretically by quadratic relations in the Pl\"ucker variables, called the \newword{Pl\"ucker relations}. Hence, the homogeneous coordinate ring of $\Gr_2(n)$ is a polynomial ring in the Pl\"ucker variables modulo these Pl\"ucker relations.

We then have an isomorphism between $\mathbb{C}[M_n]^{\SL_2}$ and the homogeneous coordinate ring of $\Gr_2(n)$, except that the gradings differ by a factor of $2$. The degree $2d$ homogeneous part of $\mathbb{C}[M_n]^{\SL_2}$ corresponds to degree $d$ polynomials in the Pl\"ucker variables, since $2 \times 2$ minors are quadratic. In particular, $\mathbb{C}[M_n]^{\SL_2}$ is supported only in even degrees. We will move back and forth between these gradings as convenient.

The polynomial ring $R=\mathbb{C}[p_{ij}: 1 \leq i < j \leq n]$ is the homogeneous coordinate ring of $\mathbb{P}^{\binom{n}{2}-1}$. Such polynomials of homogeneous degree $d$ form the global sections of the line bundle $\mathcal{O}(d)$ on $\mathbb{P}^{\binom{n}{2}-1}$. Similarly, if we quotient $R$ by the Pl\"ucker relations, the resulting functions of homogeneous degree $d$ are the global sections of the pullback line bundle $\mathcal{O}_{\Gr_2(n)}(d)$. Of course, we may then identify these functions with the degree $2d$ part of the invariant ring $\mathbb{C}[M_n]^{\SL_2}$.

For each $d$, the degree $2d$ homogeneous part of $\mathbb{C}[M_n]^{\SL_2}$ is clearly finite-dimensional, since it is spanned by $d$-fold products of Pl\"ucker variables and there are finitely many such variables. There are two convenient graphical ways to encode such a $d$-fold product.

In the tableau encoding, we fill a $2 \times d$ grid $T$ with positive integers weakly between $1$ and $n$. For each Pl\"ucker variable $p_{ij}$, we fill a column of $T$ with $i$ and $j$. Note that this grid is exactly the Young diagram of the partition $(d,d)$. We can choose to insist that, whenever $i<j$, we write $i$ in the top row and $j$ in the bottom row. The order of the columns is also inconsequential, since a product of Pl\"ucker variables is independent of the order of the factors. Hence, we may assume that we have sorted the columns of $T$ so that the entries of the first row weakly increase from left to right. Thus, a $d$-fold product of Pl\"ucker variables is encoded by an array such as in Figure~\ref{fig:bigproduct} (where in this example we are taking $d=7$ and $n\geq 7$). 
\begin{figure}[htbp]
\begin{center}
$p_{12}p_{14}p_{17}p_{25}p_{34}^2p_{47}$

\medskip

$\begin{vmatrix}
x_1 & x_2 \\
y_1 & y_2  
\end{vmatrix} 
\begin{vmatrix}
x_1 & x_4 \\
y_1 & y_4  
\end{vmatrix} 
\begin{vmatrix}
x_1 & x_7 \\
y_1 & y_7  
\end{vmatrix} 
\begin{vmatrix}
x_2 & x_5 \\
y_2 & y_5  
\end{vmatrix}
\begin{vmatrix}
x_3 & x_4 \\
y_3 & y_4  
\end{vmatrix}^2
\begin{vmatrix}
x_4 & x_7 \\
y_4 & y_7  
\end{vmatrix}$

\medskip

\ytableaushort{1112334,2475447}
\end{center}
\caption{An example of a 7-fold product of Pl\"ucker variables and its tableau encoding}
\label{fig:bigproduct}
\end{figure}
Note that the bottom row of the array satisfies no particular increasingness condition. We say that such a filling of a Young diagram is a \newword{semistandard tableau} if the entries of each row weakly increase from left to right, just as we insisted for the entries of the top row.

A fundamental result of \emph{standard monomial theory} is that, for each $d$,
 a linear basis for the degree $2d$ homogeneous part of $\mathbb{C}[M_n]^{\SL_2}$ is given by those products of Pl\"ucker variables corresponding to semistandard tableaux of shape $(d,d)$. This fact allows one, for example, to combinatorially determine the dimensions of these spaces.
 
In the web encoding, we cyclically place $n$ labeled points around the boundary of a disk, and then draw $d$ arcs through the interior of this disk joining boundary points. Precisely, for each Pl\"ucker variable $p_{ij}$,  we draw an arc from vertex $i$ to vertex $j$. Note that some arcs may necessarily cross other arcs, and indeed some pairs of arcs may share the same endpoints. Letting $d=7$ and $n=8$, the $d$-fold product encoded by our example array above may be alternatively encoded by the diagram in Figure~\ref{fig:web_for_bigproduct}.

\begin{figure}[htbp]
\begin{center}
\includegraphics[width=1.5in]{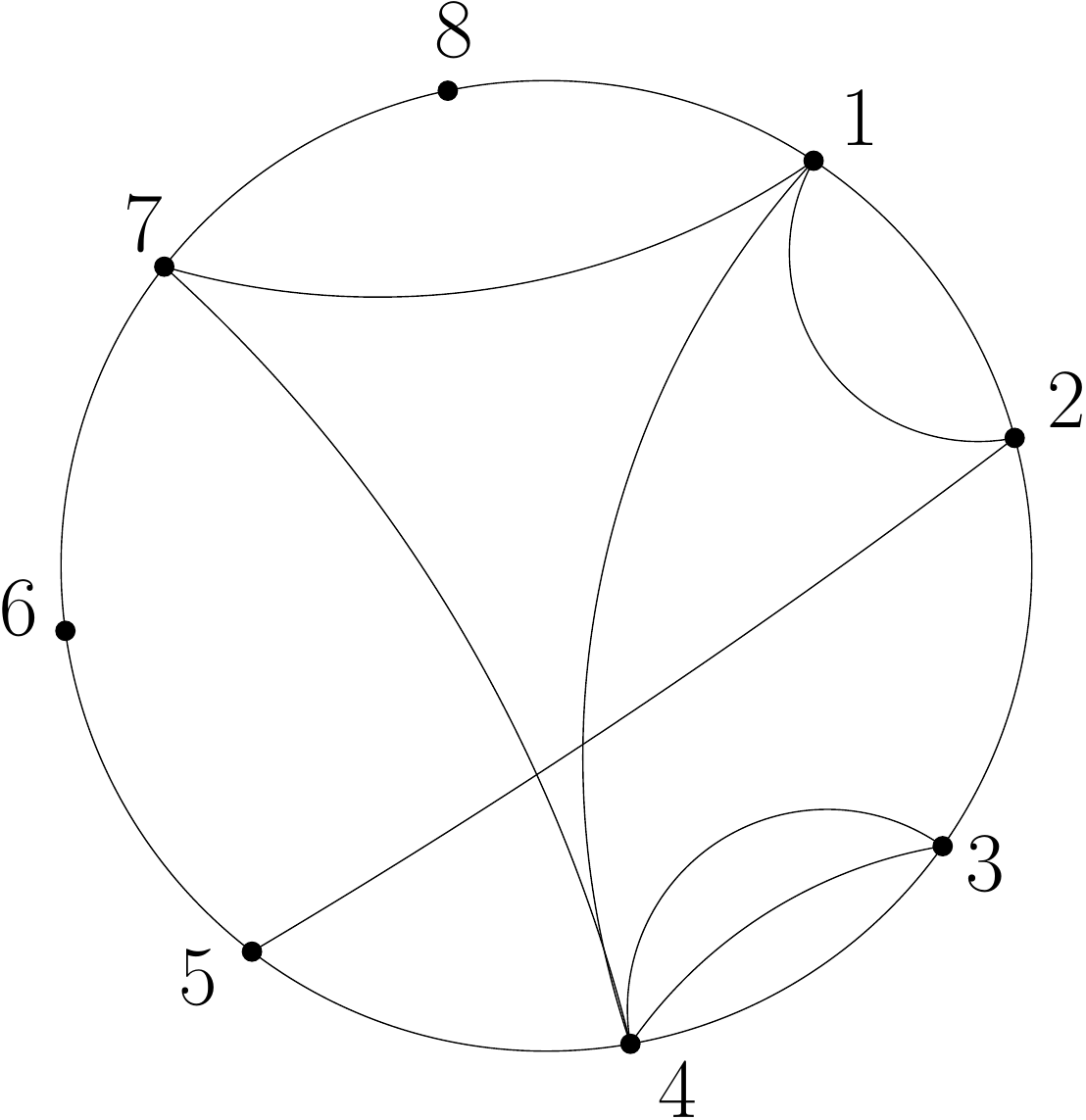}
\end{center}
\caption{A diagramatic encoding of the 7-fold product of Pl\"ucker variables from Figure~\ref{fig:bigproduct}}
\label{fig:web_for_bigproduct}
\end{figure}

Let $T_n$ denote the rank $n$ torus of diagonal invertible $n \times n$ matrices. Then $T_n$ acts on $M_n$ by right multiplication, scaling the columns. We thereby also have an action of $T_n$ on the invariant ring $\mathbb{C}[M_n]^{\SL_2}$. The $T_n$ action breaks the degree $2d$ homogeneous part of $\mathbb{C}[M_n]^{\SL_2}$ into a direct sum of \emph{weight spaces}. Specifically, we obtain an extra $\mathbb{Z}^n$-grading on invariant polynomials. Let $e_i$ denote the $i$th standard basis vector of $\mathbb{Z}^n$. Then we treat each Pl\"ucker variable $p_{ij}$ as having $\mathbb{Z}^n$-degree $e_i + e_j$. For each $v \in \mathbb{Z}^n$ with coordinates summing to $2d$, we may then consider the part of $\mathbb{C}[M_n]^{\SL_2}$ with $\mathbb{Z}^n$-degree $v$, a subspace of the space of degree $2d$ polynomials. It is straightforward to see that, for $v = (v_1, \dots, v_n)$, the homogeneous degree $v$ part of $\mathbb{C}[M_n]^{\SL_2}$ corresponds to those tableaux in which each value $i$ appears exactly $v_i$ times, or alternatively, to those arc diagrams where boundary vertex $i$ has degree $v_i$.
 
 We are particularly interested in the case $n=2d$ and the degree $(1,1, \dots, 1)$ homogeneous part of $\mathbb{C}[M_n]^{\SL_2}$. In this case, we have a basis given by semistandard tableaux where each number from $1$ to $n=2d$ appears exactly once. Such tableaux are called \newword{standard}. However, the theory of $\fsl_2$ webs gives us another, completely different basis for this space. Clearly, the degree $(1,1, \dots, 1)$ homogeneous part is spanned by products of Pl\"ucker variables corresponding to \newword{matchings}, i.e., arc diagrams where each boundary vertex is paired to exactly one other boundary vertex by an arc. It turns out that a linear basis is given by those matchings that are \newword{noncrossing}, i.e., such that the arcs can all be drawn so as to be pairwise nonintersecting. Noncrossing matchings are also known as \newword{$\fsl_2$ webs}, and the corresponding basis is called the \newword{$\fsl_2$ web basis} or the \newword{Temperley--Lieb basis}.

 The tableau and web bases are genuinely different from each other. For example, if $m=2$ and $n=4$, there are three products of pairs of distinct Pl\"ucker variables to consider, as shown in Figure~\ref{fig:basesaredifferent}. The standard tableau basis is given by the first two of these products, whereas the $\fsl_2$ web basis is given by the first and the third.
 
 \begin{figure}[ht]
 \[
 \begin{array}{ccc}
 	 \quad p_{12}p_{34}    & p_{13}p_{24}   &    p_{14}p_{23} \quad \\ \\    
   \begin{vmatrix}x_1 & x_2\\ y_1 & y_2\end{vmatrix} \begin{vmatrix}x_3 & x_4\\ y_3 & y_4\end{vmatrix}    & \begin{vmatrix}x_1 & x_3\\ y_1 & y_3\end{vmatrix} \begin{vmatrix}x_2 & x_4\\ y_2 & y_4\end{vmatrix}   &   \begin{vmatrix}x_1 & x_4\\ y_1 & y_4\end{vmatrix} \begin{vmatrix}x_2 & x_3\\ y_2 & y_3\end{vmatrix} \\ \\
 	\quad \quad \ytableaushort{13,24}     & \ytableaushort{12,34} &    \ytableaushort{12,43} \quad \quad
 \end{array}
\]
\begin{center}
 \includegraphics[width=2.75in]{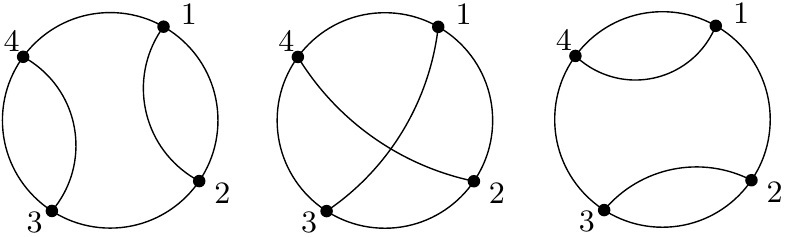}
\end{center}
\caption{A comparison of the tableau and $\fsl_2$ web bases for the degree $(1,1,1,1)$ part of $\mathbb{C}[M_4]^{\SL_2}$. The tableau basis is given by the first two of these products, corresponding to standard tableaux, whereas the $\fsl_2$ web basis is given by the first and the third, corresponding to nocrossing matchings.}
\label{fig:basesaredifferent}
\end{figure}

Famously, standard Young tableaux of shape $(d,d)$ and noncrossing matchings of $2d$ are both counted by the \emph{Catalan numbers} (see, e.g., \cite{Stanley:Catalan}). There is a well-known bijection between these sets. Although it does not preserve bases (perhaps even \emph{because} it does not preserve bases), this bijection will be important to us later.

The space $\mathbb{C}[M_n]^{\SL_2}$ caries an action of the symmetric group $S_n$ by right multiplication, permuting the columns of $M_n$. For $\mu$ a partition of $2d$, we may consider the union of the homogeneous degree $\alpha$ parts of $\mathbb{C}[M_n]^{\SL_2}$ over all $\alpha$ that are permutations of $\mu$. The $S_n$ action restricts to this union, making it an $S_n$-module; however, the isomorphism type of this representation is not easy to determine in general. Nonetheless, in the case $n=2d$, restricting to the homogeneous degree $(1,1, \dots, 1)$ part yields an irreducible $S_n$-representation, called the \newword{Specht module} $S^{(d,d)}$.

 If instead we let $M_n$ be a $3 \times n$ matrix of distinct indeterminants and consider $\mathbb{C}[M_n]^{\SL_3}$, the story is similar. 
 This invariant ring is generated by the $3 \times 3$ minors, which we identify with the Pl\"ucker coordinates $p_{ijk}$ on $\Gr_3(n)$.
 The homogeneous degree $3d$ part of $\mathbb{C}[M_n]^{\SL_3}$ is the space of global sections of the line bundle $\mathcal{O}_{\Gr_3(n)}(d)$ and has a basis given by semistandard tableaux of shape $(d,d,d)$.

 Again, we have an action of $T_n$ and may consider the weight spaces that this action induces. For $n = 3d$, the homogeneous degree $(1,1, \dots, 1)$ part has a basis given by standard tableaux of shape $(d,d,d)$. The symmetric group $S_n$ acts on this space by right multiplication, making it an $S_n$-module. Indeed, it is an irreducible representation and is a geometric realization of the Specht module $S^{(d,d,d)}$. However, there is again a genuinely different basis of this space encoded by planar diagrams. These diagrams are called \newword{$\fsl_3$ webs} and look, for example, like:
 
 \begin{center}
\includegraphics[width=1.5in]{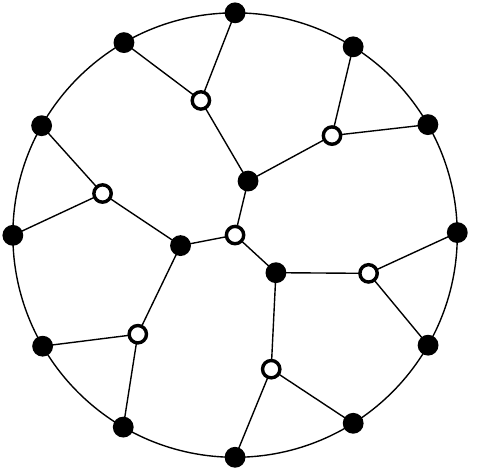}
\end{center}

We omit the rules for how to extract an invariant polynomial from such a diagram, as we will not use them, but see, for example, \cite{BazierMatte.Douville.Garver.Patrias.Thomas.Yildirim} for discussion.

\subsection{Flag varieties}
\label{sec:flagvarieties}
In this subsection, we extend the previous constructions to general partial flag varieties of type $A$, including Grassmannians of $k$-planes with $k>3$. The main difference from the previous constructions (besides some added complexity) is that will not have web bases is these settings. Although we consider arbitrary Specht modules in this subsection, our primary interest will be in those of the form $S^{(d,d,1^\ell)}$; the main result of this paper is to describe an explicit basis of invariant polynomials for those cases, realizing Rhoades' skein basis geometrically.

Let $\lambda$ be any partition and let $\mu$ be the partition whose Young diagram is the transpose of $\lambda$. In particular, if $\lambda = (d,d,1^\ell)$, then $\mu=(\ell+2,2^{d-1})$. Choose $n \geq \mu_1$ and let $M$ denote the $\mu_1 \times n$ matrix 
\[
\begin{pmatrix}
x_{11} & x_{12} & \dots & x_{1n}\\
x_{21} & x_{22} & \dots & x_{2n} \\
\vdots & \vdots & \ddots & \vdots \\
x_{\mu_1 1} & x_{\mu_12} & \dots & x_{\mu_1n}
\end{pmatrix}
\]
of $\mu_1 n$ distinct indeterminants. Now, define another partition $\nu$ as follows: if $\mu_i = \mu_{i+1}$, delete one of them until we obtain a \newword{strict partition} $\nu$ in which all parts are distinct. For example, if $\mu=(\ell+2,2^{d-1})$, then $\nu=(\ell+2,2)$. Consider the set of complex block lower triangular matrices with diagonal block sizes \[
\nu_n, \nu_{n-1} - \nu_n, \nu_{n-2} - \nu_{n-1}, \dots, \nu_1 - \nu_2
\] and each diagonal block of determinant $1$. Note that since $\nu$ is a strict partition, all of these block sizes are positive. (We could have used $\mu$ in place of $\nu$, treating blocks of size $0$ as nonexistent, but this indexing would be less convenient later). These matrices form a subgroup $P$ of the Lie group $\SL_{\mu_1}(\mathbb{C})$, which acts on $\mathbb{C}[M]$. We consider again the invariant subring $\mathbb{C}[M]^P$.

On the one hand, $\mathbb{C}[M]^P$ is the coordinate ring of the GIT quotient $\mathbb{C}^{\mu_1n} \sslash P$, which is the affine cone over the \newword{partial flag variety} $\Flags(\nu_n, \nu_{n-1}, \dots, \nu_1; n)$. This partial flag variety is a parameter space for \newword{partial flags}
\[0 \leq V_n \leq V_{n-1} \leq \dots \leq V_1 \leq \mathbb{C}^n
\]
of nested vector subspaces with $V_i$ of dimension $\nu_i$. In particular, these spaces include the Grassmannians, as $\Gr_k(n) = \Flags(k;n)$. Our main interest is in the $2$-step flag variety $\Flags(2, \ell+2; n)$.

On the other hand, classical invariant theory tells us algebraic generators for $\mathbb{C}[M]^P$. For $k$ any of the parts of $\nu$, let $p_{i_1i_2\ldots i_k}$ denote the $k \times k$ minor of $M$ that uses the top $k$ rows and the columns indexed $i_1, i_2, \dots, i_k$. Then $\mathbb{C}[M]^P$ is algebraically generated by the set 
\[
\{p_{i_1i_2\ldots i_k} : k \in \nu, 1 \leq i_1 < i_2 < \dots < i_k \leq n \}.
\] We call each $p_{i_1i_2\ldots i_k}$ a \newword{$k$-Pl\"ucker variable}. 

Representing a partial flag as a $\mu_1 \times n$ complex matrix such that $V_i$ is the span of the first $\nu_i$ rows, computing all of the Pl\"ucker variables on this matrix embeds $\Flags(\nu_n, \nu_{n-1}, \dots, \nu_1; n)$ in a product of projective spaces 
\[
\mathbb{P}^{\binom{n}{\nu_n}-1} \times \mathbb{P}^{\binom{n}{\nu_{n-1}}-1} \times \dots \times \mathbb{P}^{\binom{n}{\nu_1}-1}
\]
as a smooth subvariety. We also call this map the \newword{Pl\"ucker embedding}. 

Now, consider the invariants in $\mathbb{C}[M]^P$ of the form \[\prod_{i=1}^{\lambda_1} p(\mu_i),\] where each $p(\mu_i)$ is a $\mu_i$-Pl\"ucker variable. For example, with $\lambda=(d,d,1^\ell)$, we are looking at products of one minor of size $\ell+2$ with $d-1$ top-justified minors of size $2$. We consider this set to be the invariants of homogeneous multidegree $\mu$.  (If we write $\mathcal{L}_i$ for the twisting line bundle $\mathcal{O}(1)$ on the projective space $\mathbb{P}^{\binom{n}{\nu_i}-1}$, the invariants are the global sections of the restriction to the partial flag variety of the line bundle
\[
\mathcal{L}_n^{\otimes m_n} \otimes \mathcal{L}_{n-1}^{\otimes m_{n-1}} \otimes \dots \otimes \mathcal{L}_1^{\otimes m_1},
\]
where $m_i$ denotes the multiplicity of $\nu_i$ as a part of $\mu$.)

We may record such a product of Pl\"ucker variables by filling the Young diagram of $\lambda$ such that the $i$th column contains the $\mu_i$ subscripts of the $i$th Pl\"ucker variable.
The key result of standard monomial theory is that this space of invariants has a basis given by such fillings that are semistandard tableaux.

Again, there is an action of the torus $T_n$ by right multiplication on $M$, breaking up our invariants into weight spaces. We now restrict to the case $n = |\lambda|$ and consider the degree $(1,1, \dots, 1)$ homogeneous part. This subspace has a basis given by standard tableaux of shape $\lambda$. This subspace also carries an action of $S_n$ by right multiplication on $M$. It is, in fact, an irreducible module, the \newword{Specht module} $S^\lambda$ (this follows, for example, from the ``Specht polynomial'' description in \cite[\textsection 4.3]{Lakshmibai.Brown} of the Specht module as a quotient of the avatar we have given that has the same dimension). The basis of polynomials in our description is equivalent to the ``polytabloid'' basis discussed in, e.g., \cite[\textsection 7]{Fulton:YoungTableaux} (cf.\ \cite{Howe.Liu.Vaughan}).

\subsection{Increasing tableaux and set partitions}\label{sec:tableaux}

An \newword{increasing tableau} is a semistandard tableau in which rows (like columns) are strictly increasing. Note that standard tableaux are a special case of increasing tableaux. All increasing tableaux in this paper are assumed to be \newword{packed}, meaning that the set of numbers appearing is an initial segment of the set of positive integers. For example, $\ytableausetup{smalltableaux} \ytableaushort{123,3}$ is a packed increasing tableau, whereas $\ytableaushort{125,2} \ytableausetup{nosmalltableaux}$ is an increasing tableau that is not packed. We write $\inc(\lambda)$ for the set of all packed increasing tableaux of shape $\lambda$ and $\inc^q(\lambda)$ for the subset whose maximum entry is $q$.

We are interested in an invertible operator called \newword{$K$-promotion} on increasing tableaux. $K$-promotion was introduced in \cite{Pechenik}, building on the $K$-jeu de taquin theory of \cite{Thomas.Yong:K}, which combinatorially computes structure constants in $K$-theoretic Schubert calculus. (Warning: $K$-promotion is not a restriction to increasing tableaux of M.-P.~Sch\"utzenberger's more classical notion of \emph{promotion} on semistandard tableaux \cite{Schutzenberger}; however, the two operations coincide on standard tableaux.) 

We follow a characterization of $K$-promotion that first appeared in \cite{Dilks.Pechenik.Striker}.
Let $\tau_k$ be a operator that acts on an increasing tableau $T$ by swapping the values $k$ and $k+1$ in all boxes where doing so yields another increasing tableaux. In other words, $\tau_k$ will turn a $k$ in box $\bbb$ into a $k+1$, unless that box shares an edge with a box already labeled $k+1$; similarly, it turns a $k+1$ into a $k$, unless the $k+1$ is already adjacent to a $k$. It is easy to see that $\tau_k$ is an involution. Then $K$-promotion on $T \in \inc^q(\lambda)$ is defined to be the composite operator
\[
\psi^q \coloneqq \tau_{q-1} \circ \dots \circ \tau_2  \circ \tau_1.
\]
Since each $\tau_k$ is an involution, $\psi^q$ is invertible and partitions the set $\inc^q(\lambda)$ into orbits. However, these orbits are generally very complicated and not well understood (see \cite{Pechenik,Bloom.Pechenik.Saracino,Dilks.Pechenik.Striker,Pechenik:frames,Patrias.Pechenik} for partial results and conjectures).

The related operator of \newword{$K$-evacuation} on $\inc^q(\lambda)$ was introduced in \cite{Thomas.Yong:K} and may be defined as
\[
\epsilon^q \coloneqq \psi^1 \circ \dots \circ \psi^{q-1} \circ \psi^q.
\]

There is a bijection $\Psi$ between $\inc^{2d+\ell}(d+\ell,d+\ell)$ and ${\rm SYT}(d,d,1^{\ell})$ given in \cite[Proposition~2.1]{Pechenik}, where ${\rm SYT}(d,d,1^{\ell})$ denotes the set of standard Young tableaux of shape $(d,d,1^\ell)$. Specifically, given $T \in \inc^{2d+\ell}(d+\ell,d+\ell)$, let $A$ be the set of entries that appear twice (once in each row) and let $B$ be the set of entries appearing in the bottom row directly to the right of an element of $A$. Then $\Psi(T)$ is formed by deleting the elements of $A$ from the top row, deleting the elements of $B$ from the second row, and appending the elements of $B$ to the bottom of the first column. See Figure~\ref{fig:bijections} for an example. 

A \newword{set partition} of size $n$ is a collection of pairwise disjoint subsets of $\{1, 2, \dots, n\}$ whose union is $\{1, 2, \dots, n\}$. Each of the subsets is called a \newword{block} of the set partition. We draw a set partition by placing $n$ cyclically labeled points around the boundary of a disk, and then, for each block, drawing the convex hull of the corresponding boundary points on the disk. Observe that a set partition with all blocks of size $2$ is exactly a matching. A \newword{noncrossing partition} is a set partition with the property that the convex hulls (in the graphical depiction above) are pairwise disjoint. In particular, a matching is a noncrossing partition if and only if it is a noncrossing matching, i.e., an $\fsl_2$ web.

A \newword{singleton} in a set partition is a block of size $1$. 
For $n \in \mathbb{Z^+}$, let $\setpart(n)$ be the set of all set partitions of $\{1,2,\ldots,n\}$ with no singletons and let $\setpart(n,d)$ denote the set of all $\pi\in\setpart(n)$ with $d$ blocks. If the blocks of $\pi\in\setpart(n)$ are denoted $\pi_1,\pi_2,\ldots,\pi_d$, we say $\pi=\{\pi_1,\pi_2,\ldots,\pi_d\}$. Let $\nc(n)$ denote the set of all $\pi\in\setpart(n)$ that are noncrossing. Let $\nc(n,d)$ be the set of $\pi\in\nc(n)$ with $d$ blocks.

A useful bijection appeared in \cite[Proposition~2.3]{Pechenik} between $\inc^q(m,m)$ and noncrossing partitions of size $q$ with $q-m$ blocks and no singletons. We briefly recall this map.
Since $T\in \inc^q(m,m)$ is packed, each  $i\in \{1,\ldots,q\}$ appears in $T$ once or twice. If $i$ appears once and is in the first (second) row, $i$ will be the smallest (largest) in its block in the corresponding set partition. 
If $i$ appears twice in $T$, $i$ is neither largest nor smallest in its block in the corresponding set partition. This information is sufficient to construct the entire set partition given the restriction that it be noncrossing. See Figure~\ref{fig:bijections} for an example.

\begin{figure}[ht]
\raisebox{1in}{
\ytableaushort{147,26{10},3,5,8,9} \quad \quad
\ytableaushort{1234678,235689{10}}} \quad \quad
\includegraphics[scale=0.5]{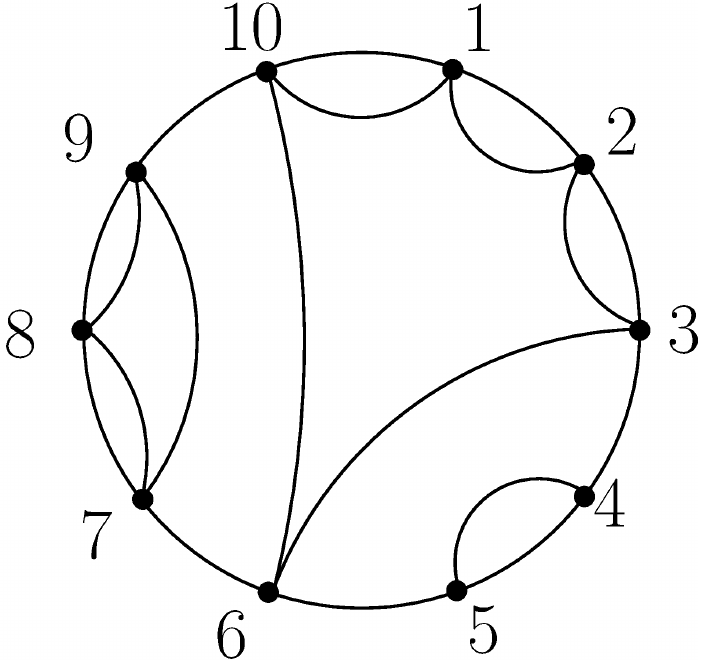} 
\caption{The bijections recalled in Section~\ref{sec:tableaux} map the packed increasing tableau $T \in \inc^{10}(7,7)$ (center) to the standard Young tableau $U \in {\rm SYT}(3,3,1^4)$ (left) and the noncrossing set partition $\pi \in \nc(10,3)$ of $\{1, 2, \dots, 10 \}$ with $3$ blocks and no singletons (right).}\label{fig:bijections}
\end{figure}

This bijection restricts to a bijection between standard tableaux of shape $(m,m)$ and $\fsl_2$ webs with $2m$ boundary vertices. This restriction is classical (see, e.g., \cite{Stanley:Catalan}).

There is also an explicit bijection between standard tableaux of shape $(m,m,m)$ and $\fsl_3$ webs with $3m$ black boundary vertices. This bijection was first given in \cite{Khovanov.Kuperberg} and later with radically simplified description in \cite{Tymoczko}.

These latter bijections all have the property that they carry $K$-promotion of these rectangular tableaux to rotation of the corresponding planar diagram (either a web or a noncrossing partition) \cite{Petersen.Pylyavskyy.Rhoades,Pechenik}. Similarly, they all carry $K$-evacuation to reflection of planar diagrams \cite{Pechenik,Patrias.Pechenik:evacuation}. (Note that, however, the bijection between increasing tableaux and standard Young tableaux is only $K$-evacuation-equivariant \cite[Proposition~5.1]{Pechenik}, and not $K$-promotion equivariant; indeed, the promotion orbits for pennant standard Young tableaux are often much larger than is possible for the corresponding increasing tableaux.) Finally, $K$-promotion of these increasing tableaux is equivariant with \emph{rowmotion} of order ideals in certain graded posets~\cite{Dilks.Pechenik.Striker}; thus, this equivariant bijection to planar diagrams that rotate also provides a good explanation for the order of this much-studied action \cite{Cameron.Fonderflaass, Striker.Williams, Striker.notices}.

\section{Main results}\label{sec:main}

In this section, we prove Theorem~\ref{thm:main}. We begin in Section~\ref{sec:jellyfish} by defining an associated polynomial for each set partition. Section~\ref{sec:lemmas} contains important lemmas for proving our main recurrence, Theorem~\ref{thm:5term}. Section~\ref{sec:5term} states and proves this theorem. Finally, Section~\ref{sec:rest} completes the proof of Theorem~\ref{thm:main}.

\subsection{Web invariants}
\label{sec:jellyfish}
In this subsection, we define a polynomial associated to each set partition; these polynomials will be our web invariants.

Choose and fix a convention for ordering the blocks of a set partition. All of our results will be independent of this convention choice. In particular, the web invariant associated to a set partition does not depend on the choice of ordering.

We begin by defining a set of tableaux that we will use to construct polynomials associated to set partitions. Because the shape consists of two full rows of boxes followed by one box in each additional row, we call these \emph{jellyfish tableaux} (cf.\ Example~\ref{ex:RStableaux}).
\begin{definition}
\label{def:rstab}
Given a set partition $\pi=\{\pi_1,\pi_2,\ldots,\pi_d\}\in\setpart(n,d)$, let $\rstab(\pi)$ be the set of generalized tableaux $T_{ij}$ (in English notation with matrix indexing) with $d$ columns (so $1\leq j\leq d$) and $n-2d+2$ rows ($1\leq i\leq n-2d+2$) obeying the following constraints:
\begin{enumerate}
\item $T_{ij}\in[n]$ or $T_{ij}$ is empty.
\item If $i\in\{1,2\}$, $T_{ij}$ is nonempty.
\item If $i>2$, there exists exactly one $j$ such that $T_{ij}$ is nonempty.
\item The nonempty entries in column $j$ are exactly the elements of $\pi_j$, in increasing order.
\end{enumerate}
Call $\rstab(\pi)$ the set of \newword{jellyfish tableaux} for $\pi$. (Note that the set of jellyfish tableaux depends on the chosen ordering convention for blocks of set partitions.)
\end{definition}

\begin{definition}
\label{def:inv}
Let $T\in \rstab(\pi)$. Define the \newword{inversion number} $\inv(T)$ as the number of inversions in the row reading word (left to right, top to bottom). Define the \newword{sign} of $T$ as $\sgn(T)=(-1)^{\inv(T)}$.
\end{definition}

Note that $i < j$ form an inversion of $T$ if and only if either $j$ appears in a higher row than $i$ or else $j$ appears left of $i$ in the same row. 

Recall the matrix $M$ from Section~\ref{sec:flagvarieties}. Let $I$ and $J$ be finite subsets of $\mathbb{N}$ and let $M_I^J$ denote the determinant of the submatrix of $M$ with rows indexed by $I$ and columns indexed by $J$, in increasing order. (For convenience, we sometimes write elements separated by commas in the subscript and superscript rather than formal sets.) Note since these minors may not be top-justified, they are not naturally expressed in Pl\"ucker variables.

\begin{definition}
\label{def:rsdet}
Given $\pi\in\setpart(n,d)$ and $T\in \rstab(\pi)$, define the product of determinants \[\rspoly(T)= 
\displaystyle\prod_{i=1}^d M_{R_i(T)}^{\pi_i},\] where $R_i(T)$ is the set of rows of $T$ containing an entry in $\pi_i$. 
\end{definition}

For example, below is a jellyfish tableau and its corresponding product of determinants. 
\begin{center}
\raisebox{4ex}{$T= \begin{ytableau}
2 & 5 & 1\\
3 & 7 & 4 \\
6 & \none \\
\none & 8 \\
10 & \none \\
\none & 9
\end{ytableau}$}
\hspace{.5in}
$\rspoly(T)=\begin{vmatrix}
x_{12} & x_{13} & x_{16} & x_{110}\\
x_{22} & x_{23} & x_{26} & x_{210}\\
x_{32} & x_{33} & x_{36} & x_{310}\\
x_{52} & x_{53} & x_{56} & x_{510}
\end{vmatrix}\cdot
\begin{vmatrix}
x_{15} & x_{17} & x_{18} & x_{19}\\
x_{25} & x_{27} & x_{28} & x_{29}\\
x_{45} & x_{47} & x_{48} & x_{49}\\
x_{65} & x_{67} & x_{68} & x_{69}
\end{vmatrix}\cdot
\begin{vmatrix}
x_{11} & x_{14} \\
x_{21} & x_{24}
\end{vmatrix}
$

\end{center}

Note that Definition~\ref{def:rsdet} reduces to the Pl\"ucker case when $\pi$ is a matching.

We now define a polynomial web invariant for each set partition $\pi$. Though we generally consider the case of no singletons, for the sake of our recurrence in Theorem~\ref{thm:5term}, we define the polynomial to be $0$ if the set partition has a singleton block.

\begin{definition}
\label{def:polynomial}
Given a set partition $\pi$ with no singletons, let $[\pi]$ denote the polynomial \[
[\pi] = \sum_{T\in \rstab(\pi)}\sgn(T) \; \rspoly(T).
\]
If $\pi$ has a singleton block, we set $[\pi]=0$.
\end{definition}

While Definition~\ref{def:polynomial} appears to depend on our choice of ordering convention for blocks of set partitions, we will show in Lemma~\ref{lem:row_col_swap} that the polynomial $[\pi]$ is independent of such choices.

\begin{example}\label{ex:RStableaux} Suppose $\pi=\{\{2, 3, 6, 10\},\{5,7,8,9\},\{1,4\}\}$ (where we assume the blocks are ordered as written). Then $\rstab(\pi)$ consists of the tableaux below.
\[\begin{ytableau}
2 & 5 & 1 \\
3 & 7 & 4 \\
6 \\
10\\
\none & 8 \\
\none & 9
\end{ytableau}\hspace{.3in}
\begin{ytableau}
2 & 5 & 1 \\
3 & 7 & 4 \\
6 \\
\none & 8\\
10 \\
\none & 9
\end{ytableau}\hspace{.3in}
\begin{ytableau}
2 & 5 & 1 \\
3 & 7 & 4 \\
6 \\
\none & 8\\
\none & 9\\
10
\end{ytableau}\hspace{.3in}
\begin{ytableau}
2 & 5 & 1 \\
3 & 7 & 4 \\
\none & 8\\
6 \\
10 \\
\none & 9
\end{ytableau}\hspace{.3in}
\begin{ytableau}
2 & 5 & 1 \\
3 & 7 & 4 \\
\none & 8\\
6 \\
\none & 9\\
10
\end{ytableau}\hspace{.3in}
\begin{ytableau}
2 & 5 & 1 \\
3 & 7 & 4 \\
\none & 8\\
\none & 9\\
6 \\
10
\end{ytableau}\]
The leftmost tableau has row reading word 2,5,1,3,7,4,6,10,8,9 and thus has 8 inversions. Reading the list of tableaux from left to right, the tableaux have 8, 7, 6, 8, 7, and 8 inversions, respectively. Finally, we have that 
\begin{align*}[\pi]=M_{1,2,3,4}^{2,3,6,10}\cdot M_{1,2,5,6}^{5,7,8,9}\cdot M_{1,2}^{1,4}
-M_{1,2,3,5}^{2,3,6,10}\cdot M_{1,2,4,6}^{5,7,8,9}\cdot M_{1,2}^{1,4}
+M_{1,2,3,6}^{2,3,6,10}\cdot M_{1,2,4,5}^{5,7,8,9}\cdot M_{1,2}^{1,4}\\
+M_{1,2,4,5}^{2,3,6,10}\cdot M_{1,2,3,6}^{5,7,8,9}\cdot M_{1,2}^{1,4}
-M_{1,2,4,6}^{2,3,6,10}\cdot M_{1,2,3,5}^{5,7,8,9}\cdot M_{1,2}^{1,4}
+M_{1,2,5,6}^{2,3,6,10}\cdot M_{1,2,3,4}^{5,7,8,9}\cdot M_{1,2}^{1,4}.
\end{align*}
\end{example}

\subsection{Inversion counting lemmas}
\label{sec:lemmas}

We now prove some useful lemmas on comparing the number of inversions in related tableaux.

\begin{definition}
Let $\rstabgen(T)$ be the set of tableaux obtainable from $T\in\rstab(\pi)$ by reordering the set of columns and/or permuting entries in a column. Let $\rstabgen(\pi)=\displaystyle\bigcup_{T\in\rstab(\pi)}\rstabgen(T)$.
\end{definition}

\begin{definition}
\label{def:inv2}
Let $T\in \rstabgen(\pi)$. Define the \newword{inversion number} $\inv(T)$ as the number of inversions in the row reading word (left to right, top to bottom), except we do not count inversions within columns. Define $\sgn(T)=(-1)^{\inv(T)}$.
\end{definition}

Note that $\rstab(\pi) \subsetneq \rstabgen(\pi)$ and Definition~\ref{def:inv} agrees with the restriction of Definition~\ref{def:inv2}, since there are no inversions within columns of a tableau in $\rstab(\pi)$.

\begin{example}
Consider the tableaux $U$ and $T$ shown below. Then $U$ is one of the jellyfish tableaux from Example~\ref{ex:RStableaux} and $T\in\rstabgen(U)$. The pairs of entries $(4,7)$, $(1,2)$, and $(9,10)$, for example, form inversions in $T$ while pairs like $(5,8)$ and $(6,10)$ do not since those entries lie in the same column. We observed in Example~\ref{ex:RStableaux} that $\inv(U)=7$, and the reader may check that $\inv(T)=15$. While $\inv(U)\neq\inv(T)$, we see that $\sgn(U)=\sgn(T)$. We explore this phenomenon further in Lemma~\ref{lem:row_col_swap}.

\[U=\begin{ytableau}
2 & 5 & 1 \\
3 & 7 & 4 \\
6 \\
\none & 8\\
10 \\
\none & 9
\end{ytableau}\hspace{1in}
T=\begin{ytableau}
7 & 4 & 2 \\
8 & 1 & 10\\
\none & \none & 6\\
 5\\
\none & \none & 3\\
9
\end{ytableau}\]
\end{example}

The following two lemmas will be used in the proof of our main recurrence, Theorem~\ref{thm:5term}. Lemma~\ref{lem:row_col_swap} also shows that Definition~\ref{def:polynomial} does not depend on the ordering convention for the blocks of $\pi$.
\begin{lemma}
\label{lem:row_col_swap}
Suppose $U\in \rstab(\pi)$ and $T\in \rstabgen(U)$. Then $\sgn(U)=\sgn(T)$.
\end{lemma}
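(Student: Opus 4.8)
The plan is to show that passing from $U\in\rstab(\pi)$ to any $T\in\rstabgen(U)$ changes the parity of the inversion count by an even amount, by breaking the passage into two types of elementary moves: (i) transposing two entries within a single column, and (ii) transposing two adjacent columns (as whole columns, keeping each column's internal order). Since $\rstabgen(U)$ is by definition generated from $U$ by column reorderings and within-column permutations, and since arbitrary permutations are generated by adjacent transpositions, it suffices to check that each elementary move of type (i) and type (ii) preserves $\sgn$.

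For a type (i) move, I would argue directly: swapping two entries $a<b$ lying in the same column $j$ does not affect any inversion counted by Definition~\ref{def:inv2}, because inversions within a column are explicitly not counted, and for any entry $c$ outside column $j$, the pair $\{c,a\}$ is an inversion if and only if $\{c,b\}$ is \emph{not} — wait, that is not quite right, so instead the cleaner statement is that for a \emph{fixed} occupied position outside column $j$, whether it forms an inversion with the entry now sitting in a given cell of column $j$ can change, but the total count changes by an even number; more precisely, the key observation (already flagged in the note after Definition~\ref{def:inv}) is that $i<j$ is an inversion iff $j$ is in a strictly higher row, or in the same row to the left of $i$. One must be a little careful: I will instead observe that a within-column swap in $T\in\rstabgen(\pi)$ takes $T$ to another element of $\rstabgen(\pi)$, and that by sorting each column into increasing order (a sequence of within-column swaps) any $T\in\rstabgen(\pi)$ is brought to a tableau $T'$ whose columns are increasing; it then suffices to treat (i) only for bringing columns into sorted order and (ii) for reordering sorted columns, and to track the net sign change.

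The cleanest route, which I would actually carry out, is: (a) first handle within-column sorting. Sorting column $j$ from some order into increasing order is a permutation $\sigma_j$ of the cells of column $j$; I claim it multiplies $\sgn$ by $\sgn(\sigma_j)$ times a correction, but in fact it is easiest to compare everything to the \emph{fully sorted, fixed-column-order} representative. So: given $T\in\rstabgen(U)$, write $T$ as: permute the columns of $U$ by some $w\in S_d$, then apply within-column permutations $\sigma_1,\dots,\sigma_d$. I will show $\inv(T)\equiv \inv(U) + (\text{number of inverted pairs of columns under } w)\cdot(\text{something even}) \pmod 2$ — concretely, swapping two adjacent columns of sizes $p$ and $q$ (all other data fixed) changes the inversion count by an amount congruent to $pq\pmod 2$ \emph{minus} the inversions internal to those columns, but since internal inversions are not counted, and since in $\rstab(\pi)$ every block (hence every column) has size $\geq 2$... hmm, this is exactly where the singleton-free hypothesis and the precise shape ($d$ columns, two full rows) enter. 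The main obstacle, and the step I expect to require the most care, is the bookkeeping for a type (ii) adjacent-column swap: I must show the number of cross-pairs that change inversion status is even. I would do this by noting that both columns being swapped have their top two cells in rows $1$ and $2$ and all remaining cells in distinct rows below row $2$; pairing up the two "paired" top cells with each other and arguing row-by-row for the tails, the parity contribution telescopes to something even. Once type (i) and type (ii) moves are each shown to preserve $\sgn$, the lemma follows immediately since these moves generate the passage from $U$ to any $T\in\rstabgen(U)$, and in particular $[\pi]$ is well-defined independent of the block-ordering convention because reordering blocks is a type (ii) move composed with relabeling.
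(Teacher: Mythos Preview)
Your overall plan---reduce to the two elementary moves (within-column transpositions and adjacent column swaps) and show each preserves $\sgn$---is exactly the paper's strategy. The issue is that you never actually complete either case: you abandon two attempts at type~(i), and for type~(ii) you anticipate a telescoping argument that is not needed.

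For type~(i), the clean argument (which is what the paper does) is this. Swap entries $i$ and $j$ sitting in the same column; this exchanges their positions in the reading word. For any entry $k$ in a different column, either $k$'s reading position is outside the interval between those two positions, in which case neither of the pairs $(i,k)$, $(j,k)$ changes inversion status, or $k$ lies strictly between, in which case \emph{both} pairs flip. Either way the change is even. The pair $(i,j)$ itself is within a column and is not counted. That is the entire argument; there is no need to sort columns first or track $\sgn(\sigma_j)$.

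For type~(ii), once you restrict to \emph{adjacent} columns $c$ and $c+1$ (as you correctly do), the argument is even shorter than you suggest. Any pair of entries in different rows keeps its relative reading order, since row index decides. For pairs in the same row: in each of rows $1$ and $2$ exactly one pair---the two entries from columns $c$ and $c+1$---flips, and there is no intermediate column to worry about; in any row $>2$ there is a single entry by the jellyfish constraint, so no pair at all. Thus exactly two pairs flip, and $\sgn$ is preserved. No telescoping and no further use of the ``no singletons'' hypothesis are required.
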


\begin{proof}
We first address the case of permuted labels within a column.
Suppose $T\in \rstabgen(U)$ and suppose $T'$ is obtained from $T$ by permuting the labels within columns.
We show $\sgn(T)=\sgn(T')$ (and hence, by repeated application, $\sgn(T) = \sgn(U)$). Note $\inv(T)$ may not equal $\inv(T')$, but they have the same parity.

It is enough to prove the result in the case that $T'$ is obtained from $T$ by interchanging two entries $i$ and $j$ in a single column.
Recall that $\inv(T)$ is the number of inversions in the row reading word of $T$, except ignoring inversions within columns. For each pair of entries $k,\ell$ of $T$, define
\[
\iota(k,\ell) = \begin{cases}
-1, & \text{if $k$ and $\ell$ are inverted in the reading word of $T$;} \\
1, & \text{otherwise.}\\
\end{cases}
\]

Consider any entry $k$ outside of the column containing $i$ and $j$.
Interchanging the elements $i$ and $j$ in $T$ swaps their
positions in the reading word. Therefore, the sum $\iota(i,k) + \iota(j,k) = 0$. Hence, this swap does not affect the parity of the number of inversions involving $(i, k)$ and $(j, k)$.
It does change whether $(i, j)$ is an inversion in the reading word; however, since they are in the same column, this does not affect the inversion number of the jellyfish tableau. Hence, $\sgn(T) = \sgn(T')$.

We now address the case of permuting the set of columns.
Suppose $T''$ is obtained from $T'$ by permuting columns $i$ and $j$. Inversions in $T'$ are the same as inversion in $T''$ except for inversions between entries in the first row of columns $i$ and $j$ and inversions between entries in the second row and in columns $i$ and $j$. If there was an inversion between the pair of entries in the first (resp.\ second) row of $T'$, there will not be an inversion between the pair of entries in the first (resp.\ second) row of $T''$. If there was not an inversion between the pair of entries in the first (resp.\ second) row of $T'$, there will be an inversion between the pair of entries in the first (resp.\ second) row of $T''$. This will not change the parity of the number of inversions. 

We conclude that ${\sgn(T')}={\sgn(T'')}$.
\end{proof}

\begin{lemma}
\label{lem:sgn_ell}
Suppose $n>4$, $\pi\in\setpart(n,d)$ with $d<n/2$ (so that $\pi$ is not a matching), and $T\in\rstab(\pi)$. 
Suppose $\pi_{m}$ is the unique part of $\pi$ with entry in the lowest row of $T$, and let $k\in\pi_m$. Suppose $T_{k}$ is the tableau constructed by removing $k$, shifting all numbers in that column up to fill the position vacated by $k$, and thus removing the position in the last row, leaving all other columns fixed. Then $\sgn(T)=\sgn(T_{k})(-1)^{n-k-|\pi_{m}|_{>k}}$, where $|\pi_m|_{>k}$ denotes the number of elements in $\pi_m$ greater than $k$.
\end{lemma}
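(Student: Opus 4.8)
The plan is to track carefully what happens to the row reading word when we pass from $T$ to $T_k$. Write $T$ in row reading order and record the positions of all entries; the only entries whose relative positions change are those lying strictly below the second row in the column containing $k$ — call these $c_1 < c_2 < \dots$ (the nonempty entries of $\pi_m$ that sit in rows $\geq 3$, listed in increasing order, which is how they appear down the column). Removing $k$ and shifting up deletes $k$ from the word and moves every $c_t$ with $c_t > k$ up one row, hence earlier in the reading word, while every other entry of $T$ keeps its relative order with every entry other than $k$. So $\inv(T) - \inv(T_k)$ equals the number of inversions in $T$ that involve $k$, plus a correction coming from the fact that each shifted $c_t$ crosses one full row's worth of entries.

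First I would handle the inversions involving $k$ directly. Since $T \in \rstab(\pi)$, the entries in the lowest row and in rows $\geq 3$ are singletons, and $k$ sits in a box in some row $r$ of the column of $\pi_m$. An entry $\ell \neq k$ is inverted with $k$ in the reading word precisely when ($\ell > k$ and $\ell$ is weakly above $k$'s row but not in $k$'s column-position-to-the-right situation) — but rather than chase this case by case, I would use the cleaner global count: by the remark following Definition~\ref{def:inv}, the total number of entries inverted with $k$ equals (number of entries $> k$ appearing before $k$ in the word) $+$ (number of entries $< k$ appearing after $k$). Because $\pi$ has no singletons among the first two rows is irrelevant here; what matters is that when $k$ is removed and the column compresses, we are simultaneously deleting $k$ and reindexing. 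I would instead argue via the identity that for \emph{any} value $k \in [n]$ and any arrangement, deleting $k$ changes the inversion count by exactly (number of $j < k$ to the right of $k$) $+$ (number of $j>k$ to the left of $k$); and then separately account for the upward shift of the $c_t > k$.

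Here is the cleaner route I would actually carry out. Step one: delete $k$ from its box \emph{without} shifting, producing an intermediate tableau $T'$ with an empty box in row $r$; by the deletion identity, $\inv(T) - \inv(T') = (\#\{j: j<k, \ j \text{ right of } k\}) + (\#\{j : j>k, \ j \text{ left of } k\})$. Step two: note every entry of $T'$ other than those in $\pi_m \cap (\text{rows} \geq 3)$ below row $r$ is already in its final position; I then slide each $c_t$ with $c_t > k$ up by one row successively. Each such slide moves $c_t$ past exactly the entries lying in one intermediate row — and crucially, because rows $3$ and below each contain exactly one entry, and $c_t$ is being moved up through boxes that (after earlier slides) are the ones just vacated, each slide moves $c_t$ past exactly the \emph{one} entry currently occupying the row it moves into, \emph{unless} that row is row $2$ or row $1$, which cannot happen since $k$ is in row $r \geq 3$ when $\pi_m$ has an entry in the lowest row and... wait — I must be careful: $k$ could be in row $1$ or $2$. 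I would split into the case $r \in \{1,2\}$ (then the shift moves \emph{every} $c_t$, i.e. all $|\pi_m|_{>k}$ of them, up through possibly-multi-entry rows) and $r \geq 3$. In both cases the net effect of all the slides is to change $\inv$ by a quantity congruent mod $2$ to the number of (entry, slid-element) crossings, which I claim telescopes to $n - k - |\pi_m|_{>k} \pmod 2$ after combining with the deletion term. The main obstacle is precisely this bookkeeping: showing that the deletion contribution $(\#\{j<k \text{ right}\}) + (\#\{j>k \text{ left}\})$ plus the shift contribution equals $n - k - |\pi_m|_{>k}$ modulo $2$, uniformly in the position $r$ of $k$ and in the configuration of the other low-row entries. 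I expect to reduce it to the elementary fact that among all $n$ values, exactly $k-1$ are less than $k$, exactly $n-k$ are greater than $k$, and that the parity of "things inverted with $k$" plus "things $> k$ that get shifted up one full row" collapses via $\#\{j \neq k\} = (k-1) + (n-k)$ and cancellation of the contribution of entries in $k$'s own column (which Lemma~\ref{lem:row_col_swap} and Definition~\ref{def:inv2} tell us we may ignore, so I would freely pass to $\rstabgen$ and rearrange $k$'s column into increasing order first). Once the within-column inversions are quotiented out, the count becomes position-independent and the identity $\inv(T) \equiv \inv(T_k) + (n-k) - |\pi_m|_{>k} \pmod 2$ falls out, which is the claim.
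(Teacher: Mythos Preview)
Your plan never quite closes: the promised ``bookkeeping'' that the deletion term plus the shift term collapses to $n-k-|\pi_m|_{>k}$ is asserted but not carried out, and the case split on $r\in\{1,2\}$ versus $r\ge 3$ is acknowledged as a complication and then left open. Your invocation of Lemma~\ref{lem:row_col_swap} at the end --- ``rearrange $k$'s column into increasing order first'' --- is a no-op, since $T\in\rstab(\pi)$ already has increasing columns; so as written it does not buy you anything.

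The paper uses Lemma~\ref{lem:row_col_swap} in a much sharper way that eliminates all of your case analysis. Rather than deleting $k$ from wherever it sits and then tracking slides, first permute within the column of $\pi_m$ so that $k$ is moved to the \emph{bottom} of that column, i.e.\ to the last row of the whole tableau (this is possible precisely because $\pi_m$ is the block occupying the last row). Call the result $T'$; Lemma~\ref{lem:row_col_swap} gives $\sgn(T)=\sgn(T')$. Now $T'$ and $T_k$ differ only by the presence of $k$ in the last row, so one simply counts inversions involving $k$ in $T'$: since $k$ is at the very end of the reading word, it is inverted with exactly the entries larger than $k$ that lie in other columns (within-column pairs being ignored by Definition~\ref{def:inv2}), of which there are $(n-k)-|\pi_m|_{>k}$. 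No shift-tracking, no case split on $r$, no telescoping.
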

\begin{proof}
Suppose the conditions of the lemma hold. Then $\sgn(T)=\sgn(T_{k})(-1)^{\alpha}$ where $\alpha$ is the difference in the number of inversions between $T$ and $T_{k}$. By Lemma~\ref{lem:row_col_swap}, we have $\sgn(T)=\sgn(T')$, where $T'$ is the tableau in $\rstabgen(T)$ in which $k$ is moved to the last row of its column and the other entries are shifted up to fill the position left by $k$. Then $T'$ differs from $T_{k}$ by deleting $k$, so we need only count the inversions involving $k$. Entry $k$ has inversions with every number larger than $k$ that is not in its column, but since it is in the last row, it has no inversions with numbers smaller than it. There are $n-k$ numbers larger than $k$ in $T$. 
Removing $k$ from $T'$ removes the inversions between $k$ and each larger number in a different column, so $\alpha={n-k-|\pi_m|_{>k}}$. Therefore, $\sgn(T)=\sgn(T_{k})(-1)^{n-k-|\pi_m|_{>k}}$.
\end{proof}

\subsection{A five-term recurrence}
\label{sec:5term}
In this subsection, we state and prove our main recurrence, Theorem~\ref{thm:5term}, which we use in the next subsection to prove Theorem~\ref{thm:main}. In particular, this recurrence will be our main tool for establishing Lemma~\ref{lemma:invariants_pennant}, showing that the web invariants $[\pi]$ live in the appropriate pennant Specht modules.

\begin{figure}[htbp]
\begin{center}
\includegraphics[width=\textwidth]{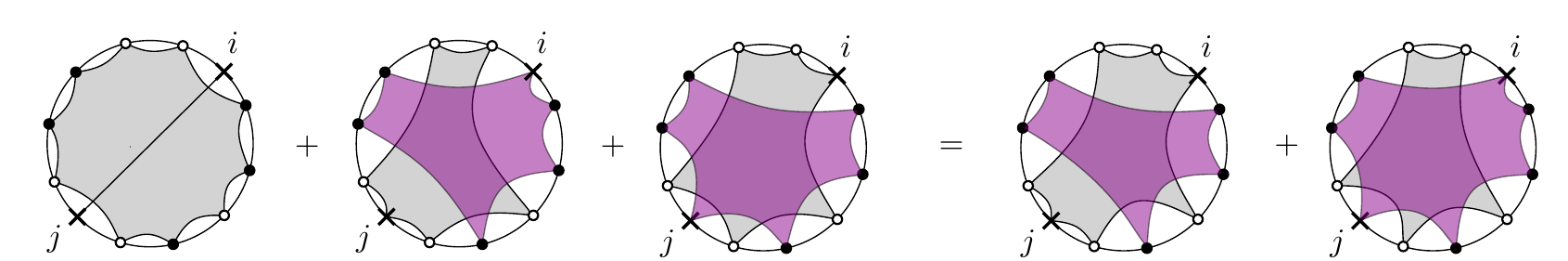}
\caption{A schematic illustration of the five-term recurrence of Theorem~\ref{thm:5term}.}
\label{fig:fivetermrecurrence}
\end{center}
\end{figure}

\begin{theorem}
\label{thm:5term}
Partition $\{1,\ldots n\}$ into four nonempty sets: $A$, $B$, $I$, and $J$, where $|I|=|J|=1$. Then 
\begin{equation}
\label{eq:5term}
\Big[\{A\cup B,I\cup J\}\Big] + \Big[\{A\cup I,B\cup J\}\Big]+ \Big[\{A\cup J,B\cup I\}\Big] = \Big[\{A\cup I\cup J,B\}\Big]+ \Big[\{A,B\cup I\cup J\}\Big].
\end{equation}
\end{theorem}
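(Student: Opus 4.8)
The plan is to prove the five-term recurrence by expanding every bracket as a sum of signed products of determinants over jellyfish tableaux, and then exhibiting a sign-respecting bijection between the terms on the two sides. Write $I=\{p\}$ and $J=\{q\}$; without loss of generality assume $p<q$. The key observation is that all five set partitions in \eqref{eq:5term} share the ``bulk'' structure coming from the blocks inside $A$ and $B$: in a jellyfish tableau for any of these partitions, the two big blocks $A\cup\bullet$ and $B\cup\bullet$ (or $A\cup I\cup J$ and $B$, etc.) each occupy one of the two full rows plus a tail of single-box rows, while the only genuinely varying data is (i) which of $p,q$ is adjoined to which big block, and (ii) where in the column of each big block the elements $p$ and $q$ sit. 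So the first step is to fix, once and for all, an ordering convention for blocks (legitimate by Lemma~\ref{lem:row_col_swap}), and to set up notation: for a fixed choice of how the ``$A$-part'' and ``$B$-part'' tails interleave in the single-box rows, each of the five terms restricts to a sub-sum, and it suffices to prove the identity one interleaving at a time.

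Next I would use Lemma~\ref{lem:sgn_ell} — or rather its underlying idea — to strip off the elements $p$ and $q$ from the relevant columns. Concretely, in $\rspoly(T)=\prod_i M_{R_i(T)}^{\pi_i}$, adjoining $p$ (or $q$) to a block $\pi_i$ multiplies that determinant's column set by one new column and its row set by one new row, and the sign $\sgn(T)$ picks up a controlled power of $-1$ governed by the position of $p$ (resp.\ $q$) among the entries of $\pi_i$ and its row. The plan is to perform Laplace / cofactor expansion of each $M_{R_i(T)}^{\pi_i}$ along the column indexed by $p$ (and, where relevant, along the column indexed by $q$), so that every product-of-determinants term becomes a signed sum of products of determinants in which $p$ and $q$ have been removed and replaced by a choice of ``missing row.'' After this expansion, every one of the five terms becomes a sum, over the common bulk data together with a choice of one or two rows to delete, of the same family of determinant products, each weighted by an explicit sign depending on $p$, $q$, the deleted rows, and which side of the partition $p,q$ went to.

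The heart of the proof is then the sign bookkeeping: once all five terms are written over a common index set of ``bulk tableau $\times$ choice of deleted rows,'' the identity \eqref{eq:5term} reduces to a finite collection of scalar identities of the shape $\pm1\pm1\pm1=\pm1\pm1$, one for each value of the index. These are exactly the Plücker-type three-term relations (the $2\times2$ case, $[\{12,34\}]-[\{13,24\}]+[\{14,23\}]=0$ in disguise) augmented by the contributions of the single-box rows; I expect each such scalar identity to follow from a short parity computation analogous to the ones in Lemma~\ref{lem:sgn_ell}. I anticipate that the main obstacle is precisely organizing this bookkeeping cleanly: matching up, across all five terms simultaneously, the row-deletion indices coming from the cofactor expansions of two \emph{different} columns (those of $p$ and of $q$) in two \emph{different} blocks, and checking that the signs assemble into the classical Grassmann relation rather than some other linear combination. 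A secondary subtlety is the degenerate cases — when $A$ or $B$ is so small that a ``big block'' has no tail, or when $n$ is near its minimum — which must be handled so that the indexing sets genuinely coincide; here the convention of padding with $\nu$ rather than $\mu$, and the vanishing convention $[\pi]=0$ for $\pi$ with a singleton, should absorb the boundary behaviour. Once the common-index reformulation is in place, I would conclude by invoking the classical Plücker relation term-by-term.
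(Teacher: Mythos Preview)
Your plan has a structural gap in how it treats the first term $[\{A\cup B,I\cup J\}]$. For the other four brackets your description is accurate: each set partition has an ``$A$-block'' and a ``$B$-block,'' and the jellyfish tableaux record an interleaving of two tails. But in $\{A\cup B,I\cup J\}$ the blocks are $A\cup B$ (merged) and the doubleton $\{p,q\}$; there is a \emph{unique} jellyfish tableau, and its polynomial is the single product $\pm\,M_{[n-2]}^{A\cup B}\,M_{1,2}^{p,q}$. There is no $A$-tail versus $B$-tail interleaving here at all, so the phrase ``prove the identity one interleaving at a time'' does not apply to this term. Moreover, your proposed cofactor expansion along the columns $p$ and $q$ only touches the $2\times 2$ factor $M_{1,2}^{p,q}$ in this term; the factor $M_{[n-2]}^{A\cup B}$ is untouched and does \emph{not} decompose as $M_{S_1}^{A}\cdot M_{S_2}^{B}$. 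Hence after your expansion the first term is not indexed by the same ``bulk data $\times$ deleted rows'' as the other four, and the promised reduction to scalar three-term Pl\"ucker identities never materializes.

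You could try to rescue the argument by additionally Laplace-expanding $M_{[n-2]}^{A\cup B}$ along the block of columns $A$, writing it as a signed sum of products $M_{S}^{A}M_{[n-2]\setminus S}^{B}$ over all $|A|$-subsets $S\subseteq[n-2]$. But then the row sets $S$ and $[n-2]\setminus S$ need not both contain $\{1,2\}$, whereas the jellyfish row sets $R_1(T),R_2(T)$ for the other four brackets always do; so the index sets still fail to line up, and further expansions are needed to reconcile them. At that point the ``short parity computation'' you anticipate is no longer a single Pl\"ucker relation per index but a genuinely multi-term cancellation, and the bookkeeping is at least as involved as the paper's argument. The paper sidesteps exactly this asymmetry by exploiting the uniqueness of the jellyfish tableau for $\{A\cup B,I\cup J\}$: it expands $M_{[n-2]}^{A\cup B}$ along its bottom \emph{row}, recognizes each summand as $\pm[\{(A\cup B)\setminus k,\,I\cup J\}]$, and applies induction on $n$. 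That inductive step is what replaces the missing ``common index set'' you are looking for.
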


The proof of this theorem, given below, is rather technical, but elementary. It follows by noting that the leftmost polynomial is associated to a single jellyfish tableau representing a $2\times 2$ determinant multiplied by an $(n-2)\times(n-2)$ determinant. The proof proceeds by row expansion of the larger determinant and induction. We use Lemmas~\ref{lem:row_col_swap} and~\ref{lem:sgn_ell} in several key spots to determine the signs of various jellyfish tableaux.

\begin{proof}
Note that we will often use sets as lists, assuming the ordering on the list corresponding to a set is increasing. We use the notation $[b]:=\{1,2,\ldots,b\}$.

We proceed by induction on $n$. The base case is $n=4$. In this case, $|A|=|B|=1$, so the right hand side terms are $0$ since they each have a singleton. This then reduces to the a classical determinantal identity, the Pl\"ucker relation.

Suppose $n>4$ and that the identity holds for set partitions of any $n-1$ elements in $[n]$. Partition $[n]$ into four nonempty sets: $A$, $B$, $I=\{i\}$, and $J=\{j\}$. For simplicity, we assume $i<j$.

We begin by noting that there is a unique jellyfish tableau $T=\scalebox{.7}{\ytableaushort{{~}i,{~}j,{~},{\vdots},{~}}}$ in $\rstab(\{A\cup B,I\cup J\})$, where the first column is filled with the elements of $A\cup B$ and the second column contains $i$ and $j$. This is because $\{A\cup B,I\cup J\}$ has one part of size $2$ (and those tableau entries must be in rows $1$ and $2$) and another part of size $n-2$.  So the polynomial $\Big[\{A\cup B,I\cup J\}\Big]=\sgn({T})M_{[n-2]}^{A\cup B} M_{1,2}^{i,j}$ is exactly one term.

We claim that $\sgn({T})=(-1)^{i+j}$ and show this by cases. In all cases, we assume the columns are ordered as written, since by Lemma~\ref{lem:row_col_swap}, $\sgn(T)$ is invariant under reordering the set of columns.

\textit{Case $i\geq 2$:} In this case, $i$ has an inversion with all numbers less than $i$ except $1$, which occurs to the left of $i$ in the first row. So the contribution to $\sgn(T)$ is $(-1)^{i-2}=(-1)^i$. If $j\geq 4$, $j$ has an inversion with all numbers less than it except for $1, i,$ and the number to its left (which is $3$ if $i=2$ and $2$ otherwise). So the contribution to $\sgn(T)$ is $(-1)^{j-4}=(-1)^j$. If $j=3$, this means $i=2$, and $j$ has one inversion with $3$ which is to its left. So the contribution to $\sgn(T)$ is $(-1)^{1}=(-1)^j$ since $j=3$.

\textit{Case $i = 1$:} In this case, $i$ has an inversion with $2$ which is to its left. So the contribution to $\sgn(T)$ is $(-1)^{1}=(-1)^i$ since $i=1$. If $j\geq 4$, $j$ has an inversion with all numbers less than it except for $2, 3$, and $i$. So the contribution to $\sgn(T)$ is $(-1)^{j-4}=(-1)^j$. If $j=3$, $j$ has one inversion with $4$ which is to its left. So the contribution to $\sgn(T)$ is $(-1)^{1}=(-1)^j$ since $j=3$. If $j=2$, $j$ has inversions with $3$ and $4$.  So the contribution to $\sgn(T)$ is $(-1)^{2}=(-1)^j$ since $j=2$.

In all cases, $\sgn({T})=(-1)^{i+j}$.

We expand the first determinant along the bottom row, obtaining the following. Note we account for skipping the columns associated to $i$ and $j$ with the minus sign outside the second summation.
\begin{align*}
\Big[\{A\cup B,I\cup J\}\Big]
=& (-1)^{i+j}M_{[n-2]}^{A\cup B} M_{1,2}^{I\cup J}\\
=& (-1)^{i+j}\Bigg(\sum_{k=1}^{i-1} (-1)^{n-k}x_{n-2,k} M_{[n-3]}^{(A\cup B)\setminus k}M_{1,2}^{I\cup J}\\
&-\sum_{k=i+1}^{j-1} (-1)^{n-k}x_{n-2,k} M_{[n-3]}^{(A\cup B)\setminus k}M_{1,2}^{I\cup J}\\
&{+\sum_{k=j+1}^{n} (-1)^{n-k}x_{n-2,k} M_{[n-3]}^{(A\cup B)\setminus k}M_{1,2}^{I\cup J}\Bigg)}
\end{align*}

We then note that $M_{[n-3]}^{(A\cup B)\setminus k}M_{1,2}^{i,j}$ is up to sign the polynomial $\Big[\{(A\cup B)\setminus k,I\cup J\}\Big]$. Let ${T}_{k}$ be the unique jellyfish tableau in $\rstab(\{(A\cup B)\setminus k,I\cup J\})$. Then we claim $\sgn(T_{k})=(-1)^{i+j}$ if $k<i$ {or $k>j$} and $\sgn(T_{k})=(-1)^{i+j-1}$ if {$i<k<j$}.

We consider $T_{k}$ in comparison to $T$. Use Lemma~\ref{lem:row_col_swap} to conclude that $\sgn(T)$ equals the sign of the tableau $\tilde{T}_{k}$ where $\tilde{T}_{k}$ is defined as the tableau in $\rstabgen(T)$ in which $k$ is shifted to be the last number in its column.

\textit{Case $k<i$:}
If $k<i$, $k$ has an inversion with both $i$ and $j$ in $\tilde{T}_{k}$. Deleting $k$ removes two inversions, and thus $\sgn(T_{k})=\sgn(T)=(-1)^{i+j}$.

\textit{Case $i<k<j$:}
If $i<k<j$, $k$ does not have an inversion with $i$ in $\tilde{T}_{k}$, but it does have an inversion with $j$. So deleting $k$ removes one inversion. Thus $\sgn(T_{k})=\sgn(T)(-1)=(-1)^{i+j-1}$.

\textit{Case $k>j$:}
If $k>j$, $k$ does not have an inversion with $i$ or $j$ in $\tilde{T}_{k}$. Deleting $k$ does not change the inversion number, and so $\sgn(T_{k})=\sgn(T)=(-1)^{i+j}$.

Therefore we have 
\begin{align*}
\Big[\{A\cup B,I\cup J\}\Big]
=(-1)^{i+j}&\Bigg(\sum_{k=1}^{i-1} (-1)^{n-k}x_{n-2,k} M_{[n-3]}^{(A\cup B)\setminus k}M_{1,2}^{I\cup J}\\
&-\sum_{k=i+1}^{j-1} (-1)^{n-k}x_{n-2,k} M_{[n-3]}^{(A\cup B)\setminus k}M_{1,2}^{I\cup J}\\
&+\sum_{k=j+1}^{n} (-1)^{n-k}x_{n-2,k} M_{[n-3]}^{(A\cup B)\setminus k}M_{1,2}^{I\cup J}\Bigg)\\
=(-1)^{i+j}&\Bigg(\sum_{k=1}^{i-1} (-1)^{n-k}x_{n-2,k} (-1)^{i+j}\Big[\{(A\cup B)\setminus k,I\cup J\}\Big]\\
&+\sum_{k=i+1}^{j-1} (-1)^{n-k-1}x_{n-2,k} (-1)^{i+j-1}\Big[\{(A\cup B)\setminus k,I\cup J\}\Big]\\
&+\sum_{k=j+1}^{n} (-1)^{n-k}x_{n-2,k} (-1)^{i+j}\Big[\{(A\cup B)\setminus k,I\cup J\}\Big]\Bigg)\\
= \sum_{k\in A\cup B} (&-1)^{n-k}x_{n-2,k} \Big[\{(A\cup B)\setminus k,I\cup J\}\Big].
\end{align*}
The set partition $\{(A\cup B)\setminus k,I\cup J\}$ partitions a set with $n-1$ elements, so we may apply the induction hypothesis:
\begin{align*}
\Big[\{(A\cup B)\setminus k,I\cup J\}\Big] = &- \Big[\{A\cup I,B\cup J\}\setminus k\Big] - \Big[\{A\cup J,B\cup I\}\setminus k\Big]\\
&+ \Big[\{A\cup I\cup J,B\}\setminus k\Big]+ \Big[\{A,B\cup I\cup J\}\setminus k\Big].
\end{align*}
Note we write these set partitions as $\{\pi\}\setminus k$ since $k$ may be in either $A$ or $B$.

Now use Definitions~\ref{def:rstab} and \ref{def:rsdet} to obtain:
\begin{align*}
\Big[\{&(A\cup B)\setminus k,I\cup J\}\Big] \\
= &- \displaystyle\sum_{T\in \rstab(\{A\cup I,B\cup J\}\setminus k)}\sgn(T)M_{R_1(T)}^{(A\cup I)\setminus k}M_{R_2(T)}^{(B\cup J)\setminus k} 
- \displaystyle\sum_{T\in \rstab(\{A\cup J,B\cup I\}\setminus k)}\sgn(T)M_{R_1(T)}^{(A\cup J)\setminus k}M_{R_2(T)}^{(B\cup I)\setminus k}\\
&+ \displaystyle\sum_{T\in \rstab(\{A\cup I\cup J,B\}\setminus k)}\sgn(T)M_{R_1(T)}^{(A\cup I\cup J)\setminus k}M_{R_2(T)}^{B\setminus k}
+ \displaystyle\sum_{T\in \rstab(\{A,B\cup I\cup J\}\setminus k)}\sgn(T)M_{R_1(T)}^{A\setminus k}M_{R_2(T)}^{(B\cup I\cup J)\setminus k}.
\end{align*}

Putting this all together, we have:
\begin{align*}
\Big[\{&A\cup B,I\cup J\}\Big]\\
=& \sum_{k\in A\cup B} (-1)^{n-k}x_{n-2,k}
\Bigg(- \displaystyle\sum_{T\in \rstab(\{A\cup I,B\cup J\}\setminus k)}\sgn(T)M_{R_1(T)}^{(A\cup I)\setminus k}M_{R_2(T)}^{(B\cup J)\setminus k} \\
&- \displaystyle\sum_{T\in \rstab(\{A\cup J,B\cup I\}\setminus k)}\sgn(T)M_{R_1(T)}^{(A\cup J)\setminus k}M_{R_2(T)}^{(B\cup I)\setminus k}
+ \displaystyle\sum_{T\in \rstab(\{A\cup I\cup J,B\}\setminus k)}\sgn(T)M_{R_1(T)}^{(A\cup I\cup J)\setminus k}M_{R_2(T)}^{B\setminus k}\\
&+ \displaystyle\sum_{T\in \rstab(\{A,B\cup I\cup J\}\setminus k)}\sgn(T)M_{R_1(T)}^{A\setminus k}M_{R_2(T)}^{(B\cup I\cup J)\setminus k}\Bigg).
\end{align*}

We now split the outer summation according to whether $k$ is in $A$ or $B$.
\begin{align*}
\Big[\{&A\cup B,I\cup J\}\Big]\\
=& \sum_{k\in A} (-1)^{n-k}x_{n-2,k}
\Bigg(- \displaystyle\sum_{T\in \rstab(\{(A\cup I)\setminus k,B\cup J\})}\sgn(T)M_{R_1(T)}^{(A\cup I)\setminus k}M_{R_2(T)}^{B\cup J}\\ 
&- \displaystyle\sum_{T\in \rstab(\{(A\cup J)\setminus k,B\cup I\})}\sgn(T)M_{R_1(T)}^{(A\cup J)\setminus k}M_{R_2(T)}^{B\cup I}\\
&+ \displaystyle\sum_{T\in \rstab(\{(A\cup I\cup J)\setminus k,B\})}\sgn(T)M_{R_1(T)}^{(A\cup I\cup J)\setminus k}M_{R_2(T)}^{B}\\
&+ \displaystyle\sum_{T\in \rstab(\{A\setminus k,B\cup I\cup J\})}\sgn(T)M_{R_1(T)}^{A\setminus k}M_{R_2(T)}^{B\cup I\cup J}\Bigg)\\
&+ \sum_{k\in B} (-1)^{n-k}x_{n-2,k}
\Bigg(- \displaystyle\sum_{T\in \rstab(\{A\cup I,(B\cup J)\setminus k\})}\sgn(T)M_{R_1(T)}^{A\cup I}M_{R_2(T)}^{(B\cup J)\setminus k}\\ 
&- \displaystyle\sum_{T\in \rstab(\{A\cup J,(B\cup I)\setminus k\})}\sgn(T)M_{R_1(T)}^{A\cup J}M_{R_2(T)}^{(B\cup I)\setminus k}\\
&+ \displaystyle\sum_{T\in \rstab(\{A\cup I\cup J,B\setminus k\})}\sgn(T)M_{R_1(T)}^{A\cup I\cup J}M_{R_2(T)}^{B\setminus k}\\
&+ \displaystyle\sum_{T\in \rstab(\{A,(B\cup I\cup J)\setminus k\})}\sgn(T)M_{R_1(T)}^{A}M_{R_2(T)}^{(B\cup I\cup J)\setminus k}\Bigg)
\end{align*}

Note that there is a natural bijection between, e.g., $\rstab(\{(A\cup I)\setminus k,B\cup J\})$ and the set of tableaux in $\rstab(\{(A\cup I),B\cup J\})$ such that the element in row $n-2$ is in column $1$. 
Let $T$ be a tableau in the second set described in the previous sentence.
Then the corresponding tableau $T_{k}\in\rstab(\{(A\cup I)\setminus k,B\cup J\})$ is constructed by deleting entry $k$ and position $n-2$ from column $1$ of $T$ and shifting entries up to fill the position left by $k$. 
By Lemma~\ref{lem:sgn_ell}, $\sgn(T)=\sgn(T_{k})(-1)^{n-k-|A\cup I|_{>k}}$. 
Note that the $(-1)^{n-k}$ in the previous sentence cancels with the $(-1)^{n-k}$ in front of the large parentheses.

This bijection is valid for any $k$, so this allows us to sum over the set of tableaux in $\rstab(\{(A\cup I),B\cup J\})$ such that the element in row $n-2$ is in column $1$ rather than $\rstab(\{(A\cup I)\setminus k,B\cup J\})$, as long as we adjust the signs using Lemma~\ref{lem:sgn_ell}. This allows us to interchange the order of summation.

Let $\rstab_{k}(\pi)$ denote the set of tableaux in $\rstab(\pi)$ with the property that the element the last row is in column $k$. We then have the following:

\begin{align*}
\Big[\{A\cup B&,I\cup J\}\Big]\\
=& 
- \displaystyle\sum_{T\in \rstab_1(\{A\cup I,B\cup J\})}\sgn(T)\sum_{k\in A} (-1)^{|A\cup I|_{>k}}x_{n-2,k}M_{R_1(T)\setminus\{n-2\}}^{(A\cup I)\setminus k}M_{R_2(T)}^{B\cup J}\\ 
&- \displaystyle\sum_{T\in \rstab_1(\{A\cup J,B\cup I\})}\sgn(T)\sum_{k\in A} (-1)^{|A\cup J|_{>k}}x_{n-2,k}M_{R_1(T)\setminus\{n-2\}}^{(A\cup J)\setminus k}M_{R_2(T)}^{B\cup I}\\
&+ \displaystyle\sum_{T\in \rstab_1(\{A\cup I\cup J,B\})}\sgn(T)\sum_{k\in A} (-1)^{|A\cup I\cup J|_{>k}}x_{n-2,k}M_{R_1(T)\setminus\{n-2\}}^{(A\cup I\cup J)\setminus k}M_{R_2(T)}^{B}\\
&+ \displaystyle\sum_{T\in \rstab_1(\{A,B\cup I\cup J\})}\sgn(T)\sum_{k\in A} (-1)^{|A|_{>k}}x_{n-2,k}M_{R_1(T)\setminus\{n-2\}}^{A\setminus k}M_{R_2(T)}^{B\cup I\cup J}\\
& 
- \displaystyle\sum_{T\in \rstab_2(\{A\cup I,B\cup J\})}\sgn(T)\sum_{k\in B} (-1)^{|B\cup J|_{>k}}x_{n-2,k}M_{R_1(T)}^{A\cup I}M_{R_2(T)\setminus\{n-2\}}^{(B\cup J)\setminus k}\\ 
&- \displaystyle\sum_{T\in \rstab_2(\{A\cup J,B\cup I\})}\sgn(T)\sum_{k\in B} (-1)^{|B\cup I|_{>k}}x_{n-2,k}M_{R_1(T)}^{A\cup J}M_{R_2(T)\setminus\{n-2\}}^{(B\cup I)\setminus k}\\
&+ \displaystyle\sum_{T\in \rstab_2(\{A\cup I\cup J,B\})}\sgn(T)\sum_{k\in B} (-1)^{|B|_{>k}}x_{n-2,k}M_{R_1(T)}^{A\cup I\cup J}M_{R_2(T)\setminus\{n-2\}}^{B\setminus k}\\
&+ \displaystyle\sum_{T\in \rstab_2(\{A,B\cup I\cup J\})}\sgn(T)\sum_{k\in B} (-1)^{|B\cup I\cup J|_{>k}}x_{n-2,k}M_{R_1(T)}^{A}M_{R_2(T)\setminus\{n-2\}}^{(B\cup I\cup J)\setminus k}.
\end{align*}

Note that the second summation of each line is \emph{almost} a bottom row determinant expansion associated to a matrix of one size larger, except the term(s) corresponding to $I$ and/or $J$ are not included. We add and subtract the needed terms as follows.

We include full details for the second summand on the first line. First, we add and subtract the $x_{n-2,i}$ term with the needed sign. Then we recollect as a larger determinant and one remaining term. 
\begin{align*}
\sum_{k\in A} (-1)^{|A\cup I|_{>k}}&x_{n-2,k}M_{R_1(T)\setminus\{n-2\}}^{(A\cup I)\setminus k}\\
=& \sum_{k\in A} (-1)^{|A\cup I|_{>k}}x_{n-2,k}M_{R_1(T)\setminus\{n-2\}}^{(A\cup I)\setminus k}\\
&+(-1)^{|A\cup I|_{>i}}x_{n-2,i}M_{R_1(T)\setminus\{n-2\}}^A-(-1)^{|A\cup I|_{>i}}x_{n-2,i}M_{R_1(T)\setminus\{n-2\}}^A\\
=& M_{R_1(T)}^{A\cup I}-(-1)^{|A\cup I|_{>i}}x_{n-2,i}M_{R_1(T)\setminus\{n-2\}}^A\\
\end{align*}

We do this for each line, except lines 4 and 7 need no additional terms added and subtracted, while lines 3 and 8 need two additional terms added and subtracted.

We then have:
\begin{align*}
\Big[\{&A\cup B,I\cup J\}\Big]\\
=& 
- \displaystyle\sum_{T\in \rstab_1(\{A\cup I,B\cup J\}) }\sgn(T)\Big(M_{R_1(T)}^{A\cup I}-(-1)^{|A\cup I|_{>i}}x_{n-2,i}M_{R_1(T)\setminus\{n-2\}}^A \Big)M_{R_2(T)}^{B\cup J}\\ 
&- \displaystyle\sum_{T\in \rstab_1(\{A\cup J,B\cup I\})}\sgn(T)\Big(M_{R_1(T)}^{A\cup J}-(-1)^{|A\cup J|_{>j}}x_{n-2,j}M_{R_1(T)\setminus\{n-2\}}^A \Big)M_{R_2(T)}^{B\cup I}\\
&+ \displaystyle\sum_{T\in \rstab_1(\{A\cup I\cup J,B\})}\sgn(T)\\
&\Big(M_{R_1(T)}^{A\cup I\cup J}-(-1)^{|A\cup I\cup J|_{>i}}x_{n-2,i}M_{R_1(T)\setminus\{n-2\}}^{A\cup J} -(-1)^{|A\cup I\cup J|_{>j}}x_{n-2,j}M_{R_1(T)\setminus\{n-2\}}^{A\cup I}\Big)M_{R_2(T)}^{B}\\
&+ \displaystyle\sum_{T\in \rstab_1(\{A,B\cup I\cup J\})}\sgn(T)\Big(M_{R_1(T)}^{A} \Big)M_{R_2(T)}^{B\cup I\cup J}\\
&
- \displaystyle\sum_{T\in \rstab_2(\{A\cup I,B\cup J\})}\sgn(T)\Big(M_{R_2(T)}^{B\cup J}-(-1)^{|B\cup J|_{>j}}x_{n-2,j}M_{R_2(T)\setminus\{n-2\}}^B \Big)M_{R_1(T)}^{A\cup I}\\ 
&- \displaystyle\sum_{T\in \rstab_2(\{A\cup J,B\cup I\})}\sgn(T)\Big(M_{R_2(T)}^{B\cup I}-(-1)^{|B\cup I|_{>i}}x_{n-2,i}M_{R_2(T)\setminus\{n-2\}}^B \Big)M_{R_1(T)}^{A\cup J}\\
&+ \displaystyle\sum_{T\in \rstab_2(\{A\cup I\cup J,B\})}\sgn(T)\Big(M_{R_2(T)}^B\Big)M_{R_1(T)}^{A\cup I\cup J}\\
&+ \displaystyle\sum_{T\in \rstab_2(\{A,B\cup I\cup J\})}\sgn(T)\\
&\Big(M_{R_2(T)}^{B\cup I\cup J}-(-1)^{|B\cup I\cup J|_{>i}}x_{n-2,i}M_{R_2(T)\setminus\{n-2\}}^{B\cup J}  -(-1)^{|B\cup I\cup J|_{>j}}x_{n-2,j}M_{R_2(T)\setminus\{n-2\}}^{B\cup I} \Big)M_{R_1(T)}^{A}.
\end{align*}

We now note there are four pairs of terms in the above that match, so if the signs are opposite, they will cancel. We show this is the case by analyzing the signs.

First let $T\in\rstab_1(\{A\cup I,B\cup J\})$. Let $T'\in\rstab_2(\{A,(B\cup I\cup J))$ be the unique tableau such that $R_1(T)\setminus\{n-2\}=R_1(T')$ and $R_2(T)=R_2(T')\setminus\{n-2\}$. We compare $\sgn(T)$ and $\sgn(T')$. One difference between $T$ and $T'$ is that $T$ has $i$ in column $1$, while $T'$ has $i$ in column $2$. The other difference is that in $T$, the unique entry in row $n-2$ is in column $1$, while in $T'$, it is in column $2$.  By Lemma~\ref{lem:row_col_swap}, $\sgn(T)$ equals the sign of the tableau, denoted $T_i$, in which $i$ is moved to the last entry of its column. Similarly, $\sgn(T')$ equals the sign of the tableau, denoted $T_i'$, in which $i$ is moved to the last entry of its column. Moreover, by Lemma~\ref{lem:sgn_ell}, 
\[\sgn(T)=\sgn(T_i)(-1)^{n-i-|A\cup I|_{>i}}  \text{ and } \sgn(T')=\sgn(T_i')(-1)^{n-i-|B\cup I\cup J|_{>i}}.\] But $T_i=T_i'$. So 
\[\sgn(T)(-1)^{n-i-|A\cup I|_{>i}}=\sgn(T')(-1)^{n-i-|B\cup I\cup J|_{>i}},\] and more simply, 
\[\sgn(T)(-1)^{|A\cup I|_{>i}}=\sgn(T')(-1)^{|B\cup I\cup J|_{>i}}.\] Therefore, the second term of the first sum cancels with the second term of the eighth sum. By similar reasoning, the second term of the second sum cancels with the third term of the eighth sum, the second term of the third sum cancels with the second term of the sixth sum, and the third term of the third sum cancels with the second term of the fifth sum.

So we have:
\begin{align*}
\Big[\{&A\cup B,I\cup J\}\Big]\\
=& 
- \displaystyle\sum_{T\in \rstab_1(\{A\cup I,B\cup J\})}\sgn(T)M_{R_2(T)}^{B\cup J}M_{R_1(T)}^{A\cup I}
- \displaystyle\sum_{T\in \rstab_1(\{A\cup J,B\cup I\})}\sgn(T)M_{R_2(T)}^{B\cup I} M_{R_1(T)}^{A\cup J}\\
&+ \displaystyle\sum_{T\in \rstab_1(\{A\cup I\cup J,B\})}\sgn(T)M_{R_2(T)}^{B}M_{R_1(T)}^{A\cup I\cup J}
+ \displaystyle\sum_{T\in \rstab_1(\{A,B\cup I\cup J\})}\sgn(T)M_{R_2(T)}^{B\cup I\cup J}M_{R_1(T)}^{A}\\
& 
- \displaystyle\sum_{T\in \rstab_2(\{A\cup I,B\cup J\})}\sgn(T)M_{R_1(T)}^{A\cup I} M_{R_2(T)}^{B\cup J} 
- \displaystyle\sum_{T\in \rstab_2(\{A\cup J,B\cup I\})}\sgn(T)M_{R_1(T)}^{A\cup J} M_{R_2(T)}^{B\cup I}\\
&+ \displaystyle\sum_{T\in \rstab_2(\{A\cup I\cup J,B\})}\sgn(T)M_{R_1(T)}^{A\cup I\cup J}M_{R_2(T)}^{B}
+ \displaystyle\sum_{T\in \rstab_2(\{A,B\cup I\cup J\})}\sgn(T)M_{R_1(T)}^{A}M_{R_2(T)}^{B\cup I\cup J}
.
\end{align*}

Noting that $\rstab_1(\pi)\cup\rstab_2(\pi)=\rstab(\pi)$ for any set partition $\pi$ with two parts, we combine sums, obtaining:
\begin{align*}
\Big[\{&A\cup B,I\cup J\}\Big]\\
=& - \displaystyle\sum_{T\in \rstab(\{A\cup I,B\cup J\})}\sgn(T)M_{R_1(T)}^{A\cup I}M_{R_2(T)}^{B\cup J} 
- \displaystyle\sum_{T\in \rstab(\{A\cup J,B\cup I\})}\sgn(T)M_{R_1(T)}^{A\cup J}M_{R_2(T)}^{B\cup I} \\
&+ \displaystyle\sum_{T\in \rstab(\{A\cup I\cup J,B\})}\sgn(T)M_{R_1(T)}^{A\cup I\cup J}M_{R_2(T)}^{B}
+ \displaystyle\sum_{T\in \rstab(\{A,B\cup I\cup J\})}\sgn(T)M_{R_1(T)}^{A}M_{R_2(T)}^{B\cup I\cup J}\\
=& -\big[\{A\cup I,B\cup J\}\big] -\big[\{A\cup J,B\cup I\}\big] + \big[\{A\cup I\cup J,B\}\big]+ \big[\{A,B\cup I\cup J\}\big],
\end{align*}
which is the desired identity.
\end{proof}

We now note that we can use this rule in set partitions with more than two blocks by fixing all blocks except those involving $A$, $B$, $I$, and $J$.
\begin{corollary}\label{cor:twoblocks}
Partition $\{1,\ldots,n\}$ into $d+2$ nonempty sets: $\pi_1,\pi_2,\ldots,\pi_{d-2}, A, B, I$, and $J$, where $|I|=|J|=1$. Then 
\begin{align*}
\big[\{\pi_1,\ldots,\pi_{d-2},A\cup B,I\cup J\}\big] +& \big[\{\pi_1,\ldots,\pi_{d-2},A\cup I,B\cup J\}\big]
+ \big[\{\pi_1,\ldots,\pi_{d-2},A\cup J,B\cup I\}\big] \\
=& \big[\{\pi_1,\ldots,\pi_{d-2},A\cup I\cup J,B\}\big]+ \big[\{\pi_1,\ldots,\pi_{d-2},A,B\cup I\cup J\}\big].
\end{align*}
\end{corollary}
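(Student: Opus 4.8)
The plan is to deduce the $d$-block identity from the two-block identity of Theorem~\ref{thm:5term} by ``freezing'' the blocks $\pi_1,\dots,\pi_{d-2}$ and summing over all ways of completing them. Write $Q = A\cup B\cup I\cup J$ and $P = \pi_1\cup\dots\cup\pi_{d-2} = \{1,\dots,n\}\setminus Q$, and call the elements of $P$ \emph{frozen} and those of $Q$ \emph{variable}. By Lemma~\ref{lem:row_col_swap} we may compute all five polynomials appearing in the corollary using the block ordering in which $\pi_1,\dots,\pi_{d-2}$ precede the two remaining blocks, so that in every jellyfish tableau the frozen blocks occupy columns $1,\dots,d-2$.

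First I would establish a freezing decomposition. In a jellyfish tableau of any of the five set partitions, rows $1$ and $2$ are forced, and the rows $3,\dots,n-2d+2$ are distributed among the $d$ blocks, block $\pi_j$ receiving $|\pi_j|-2$ of them with entries increasing downward. Let $\delta$ record only which of these rows go to each of the frozen blocks. Then $\delta$ ranges over a single fixed finite index set, \emph{the same for all five set partitions}, and it determines the frozen row sets $R_1(\delta),\dots,R_{d-2}(\delta)$, hence the frozen minors $M^{\pi_j}_{R_j(\delta)}$, and the leftover row set $\rho(\delta)$ consisting of rows $1,2$ together with the rows not assigned to a frozen block; one checks $|\rho(\delta)|=|Q|-2$. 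Grouping the defining sum of $[\pi]$ (Definition~\ref{def:polynomial}) by $\delta$: deleting the frozen columns from a jellyfish tableau $T$ with frozen data $\delta$ leaves a jellyfish tableau $T'$ of the two-block set partition of $Q$ now living on the row set $\rho(\delta)$, and $\rspoly(T)=\big(\prod_{j=1}^{d-2}M^{\pi_j}_{R_j(\delta)}\big)\cdot\rspoly(T')$. The crux is the sign identity $\sgn(T)=\epsilon(\delta)\,\sgn(T')$ for some $\epsilon(\delta)\in\{\pm1\}$ depending \emph{only on $\delta$} (not on the chosen grouping of $Q$, nor on how the variable tails interleave). To prove it I would split $\inv(T)$ into inversions between two frozen entries, between two variable entries, and between one of each: the variable--variable count is $\inv(T')$; the frozen--frozen count is a function of $\delta$ alone (given the column order); and for the mixed count one observes that a pair $\{x,y\}$ with $x$ frozen and $y$ variable is an inversion exactly when ``$x$ precedes $y$ in the reading word'' and ``$x<y$'' have opposite truth values, so modulo $2$ the mixed count equals $\sum_{x\in P,\,y\in Q}\big([x<y]+[x\text{ precedes }y]\big)$, where $\sum[x<y]$ depends only on the sets $P,Q$, and $\sum[x\text{ precedes }y]$ reduces to fixed constants plus the number of pairs $(r,r')$ with $r$ a single-entry row used by a frozen block, $r'$ a single-entry row used by a variable block, and $r<r'$; this last quantity depends only on $\delta$, since once $\delta$ is fixed the frozen and variable single-entry rows form a fixed partition of $\{3,\dots,n-2d+2\}$. (Lemmas~\ref{lem:row_col_swap} and~\ref{lem:sgn_ell} are the natural tools here, as in the proof of Theorem~\ref{thm:5term}.) Writing $[\{C,D\}]_\rho$ for the analogue of Definition~\ref{def:polynomial} in which jellyfish tableaux are required to use the prescribed row set $\rho$, we conclude
\[
[\pi] = \sum_{\delta}\epsilon(\delta)\,\Big(\prod_{j=1}^{d-2}M^{\pi_j}_{R_j(\delta)}\Big)\,[\{C,D\}]_{\rho(\delta)},
\]
and, crucially, the index set of $\delta$'s, the signs $\epsilon(\delta)$, the frozen products, and the row sets $\rho(\delta)$ are all identical across the five set partitions of the corollary.

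Next I would record the mild generalization of Theorem~\ref{thm:5term} needed: for any finite $Q\subseteq\mathbb{N}$ partitioned into nonempty $A,B,I,J$ with $|I|=|J|=1$ and any $\rho\subseteq\mathbb{N}$ with $|\rho|=|Q|-2$, the five-term identity holds verbatim for the polynomials $[\cdot]_\rho$. This is immediate: writing $Q=\{q_1<\dots<q_{n'}\}$ and $\rho=\{r_1<\dots<r_{n'-2}\}$, apply the substitution $x_{ij}\mapsto x_{r_iq_j}$ to Theorem~\ref{thm:5term} taken with $n:=n'$ (and with $A,B,I,J$ replaced by their preimages under $a\mapsto q_a$). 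Since $a\mapsto q_a$ is an order isomorphism, inversion numbers of jellyfish tableaux, hence their signs, are preserved, and the minor $M^{A_0}_{R}$ is sent to $M^{q(A_0)}_{r(R)}$; so this substitution carries each $[\pi_0]$ to the corresponding $[\cdot]_\rho$ and the whole identity transforms as claimed.

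Finally I would combine: applying the freezing decomposition to all five set partitions in the corollary and using that everything except the factor $[\{C,D\}]_{\rho(\delta)}$ is shared, the corollary reduces, term by term in $\delta$, to the generalized five-term identity for ground set $Q$ and row set $\rho(\delta)$ from the previous paragraph. I expect the main obstacle to be the sign identity $\sgn(T)=\epsilon(\delta)\,\sgn(T')$ in the freezing step; the remaining steps are either straightforward bookkeeping or a direct appeal to Theorem~\ref{thm:5term}.
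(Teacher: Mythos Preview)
Your argument is correct. Both you and the paper rely on the same underlying observation—that the blocks $\pi_1,\dots,\pi_{d-2}$ are bystanders whose contribution factors out—but you execute it differently. The paper simply asserts that ``the arguments in the proof of Theorem~\ref{thm:5term} apply'' once the frozen determinants are factored out, pointing to Lemma~\ref{lem:sgn_ell} as the only sign tool needed and noting it holds for jellyfish tableaux with any number of columns; this is terse and asks the reader to re-trace the entire induction with extra columns present. You instead make the factorization
\[
[\pi]=\sum_{\delta}\epsilon(\delta)\Big(\prod_{j=1}^{d-2}M^{\pi_j}_{R_j(\delta)}\Big)\,[\{C,D\}]_{\rho(\delta)}
\]
fully explicit and then invoke Theorem~\ref{thm:5term} as a black box via the order-preserving substitution $x_{ij}\mapsto x_{r_i q_j}$. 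Your sign analysis—splitting $\inv(T)$ into frozen--frozen, variable--variable, and mixed contributions and showing the mixed count modulo $2$ depends only on the row assignment $\delta$—is precisely the content that the paper's one-line appeal to Lemma~\ref{lem:sgn_ell} is meant to cover; you have established it directly rather than through that lemma. Your route is longer to write but is self-contained and avoids re-examining the lengthy induction of Theorem~\ref{thm:5term}.
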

\begin{proof}
If $|A|=|B|=1$, then the righthand side terms are $0$ since they each have a singleton. All the jellyfish tableaux for each of the three terms are the same shape; the only thing that differs is the content of the two length-two columns involving $A,B,I,J$. Thus, the determinants corresponding to the $\pi_1,\ldots,\pi_{d-2}$ factor out and we are left with the classical Pl\"ucker relation of determinants. 

In cases where at least one of $|A|$ or $|B|$ is greater than 1, the arguments in the proof of Theorem~\ref{thm:5term} apply, since the determinants corresponding to $\pi_1,\pi_2,\ldots,\pi_{d-2}$ factor out as long as the signs work out correctly. At each point of the proof where signs matter, we are only comparing $T$ to $T_{k}$ as in Lemma~\ref{lem:sgn_ell}. This lemma works for all jellyfish tableaux, not only those with two parts, therefore, the statement holds.
\end{proof}

\subsection{Proof of Theorem~\ref{thm:main}}
\label{sec:rest}

We now prove our main theorem via the following lemmas and propositions. We first show linear independence in Lemma~\ref{lemma:independent}. We then show in Lemma~\ref{lemma:invariants_pennant} that each polynomial associated to a set partition lives in the pennant Specht module. In Lemma~\ref{lemma:basis}, we show this subset is a basis. Finally, we prove the claims about the $S_n$ action in Corollary~\ref{prop:rotation} and note these results hold for general (not necessarily noncrossing) set partitions.

\begin{lemma}\label{lemma:independent}
The set $\{ [\pi] : \pi \in \nc(n,d) \}$ of invariants of noncrossing partitions of $n$ with $d$ blocks and no singletons is linearly independent. 
\end{lemma}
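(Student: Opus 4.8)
The plan is to exhibit, for each noncrossing partition $\pi \in \nc(n,d)$, a monomial (a product of matrix entries $x_{ij}$) that appears in the expansion of $[\pi]$ but in none of the $[\pi']$ for $\pi' \in \nc(n,d)$ with $\pi' \neq \pi$; a ``triangularity'' argument of this flavor then immediately yields linear independence. Concretely, I would associate to $\pi$ a distinguished jellyfish tableau $T_\pi \in \rstab(\pi)$ — the one that places the two smallest elements of each block in rows $1$ and $2$ and the remaining elements of each block, sorted, down the single-column tail in the canonical left-to-right block order — and then extract from $\rspoly(T_\pi)$ a distinguished monomial $m_\pi$, obtained by taking the ``diagonal'' term of each determinant factor $M_{R_i(T_\pi)}^{\pi_i}$ (i.e. pairing the rows of $R_i(T_\pi)$ with the columns of $\pi_i$ in increasing order). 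The noncrossing condition is exactly what makes this choice rigid, so the hope is that $m_\pi$ records enough of $\pi$ to recover it.

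The key steps, in order: (1) Define $T_\pi$ and $m_\pi$ precisely and verify $m_\pi$ indeed occurs in $\rspoly(T_\pi)$ with nonzero coefficient. (2) Show $m_\pi$ occurs in $[\pi]$ with nonzero coefficient: since $[\pi] = \sum_{T \in \rstab(\pi)} \sgn(T)\,\rspoly(T)$, one must check that the $\pm m_\pi$ contributions coming from different tableaux $T$ and different diagonal/off-diagonal terms of the determinants do not cancel; the cleanest route is to choose $m_\pi$ to be \emph{extremal} in a suitable monomial order (e.g. lexicographically largest, reading row indices against column indices), so that it can only arise as the leading diagonal term of $\rspoly(T_\pi)$ and from no other expansion term, forcing its coefficient to be exactly $\sgn(T_\pi) = \pm 1$. (3) Show that $\pi \mapsto m_\pi$ is injective on $\nc(n,d)$, and moreover that $m_\pi$ does not appear in $[\pi']$ for any other $\pi' \in \nc(n,d)$: this is where the noncrossing hypothesis does the real work — from the multiset of (row index, column index) pairs in $m_\pi$ one reads off, for each $j \in [n]$, whether $j$ is the minimum of its block, a non-extremal element, or (recorded via the tail rows) which block it lies in, and the noncrossing condition pins down the block structure uniquely from this local data, exactly as in the bijection recalled from \cite[Proposition~2.3]{Pechenik}. (4) Conclude: if $\sum_\pi c_\pi [\pi] = 0$, pick $\pi$ maximal (in the monomial order transported through step 3) with $c_\pi \neq 0$; the coefficient of $m_\pi$ on the left is $\pm c_\pi \neq 0$, a contradiction.

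The main obstacle I anticipate is step (2) — proving there is no cancellation in the coefficient of $m_\pi$ within $[\pi]$ itself. The determinants $M_{R_i(T)}^{\pi_i}$ are not Pl\"ucker variables (they need not be top-justified), so each one is a genuine alternating sum over permutations, and as $T$ ranges over $\rstab(\pi)$ the row sets $R_i(T)$ vary; a priori many of these products could contribute the same monomial with competing signs. The remedy is to be careful about the monomial order: one wants the chosen $m_\pi$ to be the unique $\le$-maximal monomial across \emph{all} of $\bigcup_{T}\rspoly(T)$, which should follow because the tableau $T_\pi$ simultaneously minimizes the tail-row indices (they are forced to be $3,4,\dots,n-2d+2$ in the canonical order) and, within the top two rows, the ``diagonal'' pairing dominates every other permutation term. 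Making this precise — choosing the order and checking maximality is uncontested — is the technical heart; steps (1), (3), (4) are then bookkeeping, with (3) leaning on the already-established noncrossing reconstruction. An alternative, if the monomial bookkeeping proves unwieldy, is to instead use the $S_n$-equivariance (Theorem~\ref{thm:main}'s rotation/reflection statements, or rather their proofs) together with a dimension count, but the triangularity approach is more self-contained and is the one I would pursue first.
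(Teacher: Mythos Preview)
Your proposal is correct and takes essentially the same approach as the paper: both arguments are leading-monomial triangularity proofs, and your identification of step~(2) as the crux (resolved by choosing a monomial order so that $m_\pi$ is the unique extremal monomial) is exactly right.

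The paper's execution is slicker in one respect worth noting. Rather than tracking a distinguished jellyfish tableau and the full tail data, the paper uses the lexicographic order with the second row \emph{reversed}:
\[
x_{1,1}>\cdots>x_{1,n}>x_{2,n}>\cdots>x_{2,1}>x_{3,1}>\cdots
\]
Under this order the $x_{1,\cdot}$ factors of the leading monomial of $[\pi]$ pick out the minimum of each block and the $x_{2,\cdot}$ factors pick out the maximum of each block (since rows $1$ and $2$ lie in every determinant $M_{R_i(T)}^{\pi_i}$, independent of $T$). The pair $(\min,\max)$ of each block already determines a noncrossing partition uniquely, so the paper never needs to analyze the tail rows or worry about which $T\in\rstab(\pi)$ achieves the leading term---the $x_{1,\cdot},x_{2,\cdot}$ part of the leading monomial alone separates the $[\pi]$. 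Your version, with all rows ordered forward, would pick out $(\min,\text{second-min})$ from rows $1,2$, which does \emph{not} by itself determine $\pi$, forcing you into the extra bookkeeping with $T_\pi$ and the tail that you anticipated; the reversed-row-$2$ trick sidesteps all of it.
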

\begin{proof}
Let $k = n - 2d + 2$, and let $\pi\in\nc(n,d)$. We may order the monomials in $[\pi]$ under the lexicographic order with 
\begin{align*}x_{1,1} > x_{1,2} &> \cdots > x_{1,n} > \\
x_{2,n} > x_{2,n-1}&> \cdots >x_{2,1}>\\
x_{3,1}>x_{3,2}&>\cdots>x_{3,n}>\\
\vdots\\
x_{k,1}>x_{k,2} &>\cdots > x_{k,n}.
\end{align*}
(Note that the second row here is ordered differently from the others.)

Each element of the set $\{ [\pi] : \pi \in \nc(n,d) \}$ is a polynomial where each term has degree $n$ and has a different leading monomial under this term order. The reason is as follows. The factors of the form $x_{1,i}$ in the leading monomial come from the smallest starting point of each block, and the factors of the form $x_{2,i}$ come from the largest ending point of each block. This information is enough to determine a noncrossing partition uniquely.

Suppose $\{\pi^1,\ldots,\pi^m\}\subset \nc(n,d)$ and \[0=a_1[\pi^1]+\cdots+a_m[\pi^m].\]
One of these $[\pi^i]$ has the largest leading monomial under the lexicographic ordering, and none of the other invariants contain this monomial. It follows inductively that $a_1=\dots=a_m=0$.
\end{proof}

To prove our main theorem, we only need the following lemma in the case of noncrossing partitions. However, we prove it in generality for future use.

\begin{lemma}
\label{lemma:invariants_pennant}
The set $\{ [\pi] : \pi \in \setpart(n,d) \}$ of invariants of partitions of $n$ with $d$ blocks and no singletons is a subset of the pennant Specht module $S^{(d,d,1^{n-2d})}$.
\end{lemma}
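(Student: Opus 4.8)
The goal is to show that each web invariant $[\pi]$ lies in the degree $(1,1,\dots,1)$ homogeneous part of the invariant ring $\mathbb{C}[M]^P$, which by Subsection~\ref{sec:flagvarieties} is the geometric realization of $S^{(d,d,1^{n-2d})}$. There are three things to verify: (i) $[\pi]$ has the correct multidegree, meaning each column index $1,\dots,n$ appears exactly once among the determinant subscripts; (ii) $[\pi]$ is built from Pl\"ucker-type minors of the appropriate sizes (one minor of size $n-2d+2 = \ell+2$ and $d-1$ minors of size $2$); and (iii) $[\pi]$ is actually $P$-invariant, i.e.\ invariant under the block lower-triangular subgroup of $\SL_{\ell+2}(\mathbb{C})$ with block sizes dictated by $\nu = (\ell+2,2)$, which here means block sizes $2$ and $\ell$.

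Parts (i) and (ii) are immediate from the definitions: by Definition~\ref{def:rstab}, each jellyfish tableau $T \in \rstab(\pi)$ uses each element of $[n]$ exactly once (the entries of column $j$ are exactly the block $\pi_j$, and the blocks partition $[n]$), so every $x_{\bullet,i}$ appears to total degree one in each monomial of $\rspoly(T)$; and the minor $M_{R_j(T)}^{\pi_j}$ has size $|\pi_j|$, where exactly one block has size $\ell+2$ or more — wait, more carefully, the block containing the entries in rows $>2$ has size $|\pi_m| \geq 3$ when $\pi$ is not a matching, but the row set $R_m(T)$ can be any size. The point I actually need is that $\rspoly(T)$ is a product of minors of $M$, hence lies in $\mathbb{C}[M]$ with the right $T_n$-weight; whether it lies in $\mathbb{C}[M]^P$ is the content of (iii). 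I would phrase (i)+(ii) as: $[\pi]$ is a homogeneous element of multidegree $(1,\dots,1)$ in $\mathbb{C}[M]$.

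The heart of the matter is (iii), $P$-invariance, and this is where I expect the main work to lie. The strategy I would use: an element $g \in P$ acts by left multiplication $M \mapsto gM$. Since $P$ is generated by its diagonal torus (in $\SL$, so determinant-one diagonal blocks) together with the ``strictly lower'' block unipotent part, and since each $\rspoly(T)$ is visibly invariant under any block-diagonal $\SL$ element (a minor using the top $r$ rows only is $\SL_r$-invariant when $r$ is one of the sizes $\ell+2$ or $2$ — here I must be careful that the minors $M_{R_j(T)}^{\pi_j}$ use row sets $R_j(T)$ that need not be $\{1,\dots,r\}$, so this is not automatic for individual terms, only for the sum $[\pi]$), the real content is invariance under adding a multiple of an upper row to a lower row — equivalently, under the one-parameter unipotent subgroups $\mathrm{Id} + t\,E_{ab}$ with $a > b$ respecting the block structure. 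I would argue that applying such a unipotent to $M$ and expanding $[\pi] = \sum_T \sgn(T)\rspoly(T)$ multilinearly in the affected row produces, via standard multilinear-algebra of determinants (row $a$ of each minor becomes ``old row $a$'' plus $t\cdot$``old row $b$''; the $t$-linear terms have a repeated row $b$ inside a single minor only when both rows $a,b$ index that minor, else they produce new tableaux), a telescoping cancellation across the sum over $\rstab(\pi)$. Here is precisely where Theorem~\ref{thm:5term} / Corollary~\ref{cor:twoblocks}, or rather the sign-bookkeeping Lemmas~\ref{lem:row_col_swap} and~\ref{lem:sgn_ell}, should do the heavy lifting: the $t$-linear correction terms should reorganize into signed sums of $\rspoly$ over jellyfish tableaux of nearby set partitions, and either cancel in pairs or vanish because a minor acquires a repeated column.

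\textbf{Main obstacle.} The delicate point is that the individual determinants $\rspoly(T)$ are \emph{not} $P$-invariant — only the full alternating sum $[\pi]$ is — so the proof cannot proceed term-by-term; one must track how the unipotent action permutes and recombines the terms of $\sum_{T\in\rstab(\pi)}\sgn(T)\rspoly(T)$, and verify that the signs produced by Lemma~\ref{lem:sgn_ell} (the $(-1)^{n-k-|\pi_m|_{>k}}$ factors coming from moving an entry to the bottom of its column) are exactly those needed to force cancellation. An alternative, possibly cleaner, route — which I would pursue if the direct unipotent computation gets unwieldy — is to instead show $[\pi]$ lies in the span of the tableau basis: express $[\pi]$, via repeated use of Corollary~\ref{cor:twoblocks} (the five-term ``uncrossing'' relations) together with ordinary Pl\"ucker relations inside each pair of rows, as a $\mathbb{Z}$-linear combination of the semistandard/standard jellyfish-type tableaux that manifestly index elements of $S^{(d,d,1^{n-2d})}$ by the standard monomial theory of Subsection~\ref{sec:flagvarieties}. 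In that approach the obstacle shifts to proving such a straightening terminates, but the payoff is that membership in $S^\lambda$ becomes a formal consequence of the recurrence already proved. Either way, the combinatorial sign lemmas of Subsection~\ref{sec:lemmas} and Theorem~\ref{thm:5term} are the essential inputs.
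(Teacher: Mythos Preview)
Your primary approach (direct verification of $P$-invariance by tracking how unipotent generators act on $\sum_T \sgn(T)\rspoly(T)$) is \emph{not} what the paper does, and there is a gap in it as written: even if you succeed in showing $[\pi]\in\mathbb{C}[M]^P$ with $T_n$-weight $(1,\dots,1)$, you have not explained why $[\pi]$ lands in the multidegree-$\mu$ piece (one $(\ell+2)$-Pl\"ucker times $d-1$ $2$-Pl\"uckers) rather than some other product type. This \emph{can} be fixed by observing that every monomial of $\rspoly(T)$ has row-degree exactly $(d,d,1^\ell)$, which forces the Pl\"ucker-monomial expansion to use one $(\ell+2)$-minor and $d-1$ $2$-minors; but you did not make this argument, and the cancellation you allude to for the unipotent action is left entirely unverified.

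The paper instead takes your ``alternative, possibly cleaner, route,'' but organizes it more simply than you suggest. The key base observation is that if $\pi$ has $d-1$ blocks of size $2$ (so one block of size $n-2d+2$), then $\rstab(\pi)$ contains a \emph{unique} tableau, and that tableau is literally a filling of the pennant shape $(d,d,1^{n-2d})$; hence $[\pi]$ is (up to sign) a single product of top-justified Pl\"ucker variables and manifestly lies in $S^{(d,d,1^{n-2d})}$. For general $\pi$, the paper first handles $d=2$ by induction on the size of the smaller block: given $\pi=\{\{t_1,\dots,t_{m+1}\},\{s_1,\dots,s_r\}\}$, apply Theorem~\ref{thm:5term} with $A=\{s_1,\dots,s_r\}$, $B=\{t_1,\dots,t_{m-1}\}$, $I=\{t_m\}$, $J=\{t_{m+1}\}$; the five-term relation then expresses $[\pi]$ as a $\mathbb{Z}$-linear combination of invariants whose smaller block has size at most $m$, and the induction closes. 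The general-$d$ case then follows immediately from Corollary~\ref{cor:twoblocks}. No ordinary Pl\"ucker relations are needed, and there is no straightening-termination issue: the induction variable (size of smallest block) strictly decreases.
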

\begin{proof}
Recall from Section~\ref{sec:flagvarieties} that $S^{(d,d,1^{n-2d})}$ is generated by fillings of partition shape $(d,d,n-2d)$ where each of $\{1,\ldots,n\}$ is used exactly once and the invariant is given by multiplying the Pl\"ucker variables corresponding to the columns of the filling. Note that if $ \pi \in \setpart(n,d)$ has $d-1$ blocks of size $2$, then $[\pi]$ is a single term where the corresponding jellyfish tableau is a filling of partition shape $(d,d,n-2d)$ with each of $\{1,\ldots,n\}$ used exactly once. Thus clearly $[\pi]\in S^{(d,d,1^{n-2d})}$. We use this fact repeatedly in our argument below.

We first consider the case where $d=2$. That is, we show that the invariant of any set partition of $n$ with 2 blocks and no singletons is in $S^{(2,2,1^{n-4})}$. We show this by induction on the number of elements in the smaller block.

The first non-trivial case is when the smaller block has three elements. Suppose that $\pi=\{\{t_1, t_2, t_3\}, \{s_1,\ldots,s_r\}\}$ and $r\geq 3$. Apply Equation~\eqref{eq:5term} with $A=\{s_1,\ldots,s_r\}$, $B=\{t_1\}$, $I=\{t_2\}$ and $J=\{t_3\}$ to obtain the following:
\begin{align*}\Big[\{\{s_1,\ldots,s_r,t_1\},\{t_2,t_3\}\}\Big] &+ \Big[\{\{s_1,\ldots,s_r,t_2\},\{t_1,t_3\}\}\Big] + \Big[\{\{s_1,\ldots,s_r,t_3\},\{t_1,t_2\}\}\Big] \\
&= \Big[\{\{s_1,\ldots,s_r,t_2,t_3\},\{t_1\}\}\Big]+ \Big[\{\{s_1,\ldots,s_r\},\{t_1,t_2,t_3\}\}\Big].
\end{align*}
Then all terms on the left side of the equality become invariants for partitions with a block of size 2 and a block of size $r+1$, and the only non-zero term to the right of the equality is $[\pi]$.

Fix some positive integer $m$. Suppose now that the invariant of any two-block partition with the smaller block of size $k$ and the larger block of size $r$ is in $S^{(2,2,1^{n-4})}$ for $k\leq m$, and we will show the result holds for two-block partitions with the smaller block of size $m+1$ and the second block of size $r$. Let $\pi=\{\{t_1,\ldots,t_{m+1}\}.\{s_1,\ldots,s_r\}\}$ be such a partition. Apply Equation~\eqref{eq:5term} with $A=\{s_1,\ldots,s_r\}$, $B=\{t_1,\ldots,t_{m-1}\}$, $I=\{t_m\}$ and $J=\{t_{m+1}\}$ to obtain the following:
\begin{align*}\Big[\{\{s_1,\ldots,s_r,t_1,\ldots,t_{m-1}\},\{t_m,t_{m+1}\}\}\Big] &+ \Big[\{\{s_1,\ldots,s_r,t_m\},\{t_1,\ldots,t_{m-1},t_{m+1}\}\}\Big]\\ &+ \Big[\{\{s_1,\ldots,s_r,t_{m+1}\},\{t_1,\ldots,t_{m-1},t_m\}\}\Big] \\
= \Big[\{\{s_1,\ldots,s_r,t_m,t_{m+1}\},\{t_1,\ldots,t_{m-1}\}\}\Big]  &+ \Big[\{\{s_1,\ldots,s_r\},\{t_1,\ldots,t_{m-1},t_m,t_{m+1}\}\}\Big].
\end{align*} 
This gives a relation between the invariants of the five following two-block partitions:
\begin{itemize} 
\item a partition with smaller block of size $2$,
\item a partition with a block of size $r+1$ and a block of size $m$, 
\item another partition with a block of size $r+1$ and a block of size $m$,  
\item a partition with a block of size $r+2$ and block of size $m-1$, and
\item our original partition $\pi$.
\end{itemize}
The first invariant is in $S^{(2,2,1^{n-4})}$ by definition, and the second, third, and fourth invariants listed are in $S^{(2,2,1^{n-4})}$ by induction.
This completes the proof for $\pi\in\setpart(n,2)$. The desired result then follows from Corollary~\ref{cor:twoblocks}.
\end{proof}

\begin{lemma}\label{lemma:basis}
The set $\{ [\pi] : \pi \in \nc(n,d) \}$ of invariants of noncrossing partitions of $n$ with $d$ blocks and no singletons is basis of the pennant Specht module $S^{(d,d,1^{n-2d})}$.
\end{lemma}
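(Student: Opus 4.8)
The plan is to deduce Lemma~\ref{lemma:basis} by combining the three preceding results with a dimension count. From Lemma~\ref{lemma:independent} we know the set $\{[\pi] : \pi \in \nc(n,d)\}$ is linearly independent, and from Lemma~\ref{lemma:invariants_pennant} we know every $[\pi]$ lies in $S^{(d,d,1^{n-2d})}$. So it remains only to check that the cardinality of $\nc(n,d)$ equals $\dim S^{(d,d,1^{n-2d})}$; a linearly independent subset of the right size in a vector space is automatically a basis.

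For the dimension count, I would invoke the combinatorial bijections already recalled in Section~\ref{sec:tableaux}. By standard representation theory, $\dim S^{(d,d,1^\ell)} = |\mathrm{SYT}(d,d,1^\ell)|$, and by \cite[Proposition~2.1]{Pechenik} (the map $\Psi$ recalled in Subsection~\ref{sec:tableaux}) this equals $|\inc^{2d+\ell}(d+\ell,d+\ell)|$. Then the bijection of \cite[Proposition~2.3]{Pechenik} identifies $\inc^{q}(m,m)$ with noncrossing partitions of size $q$ having $q-m$ blocks and no singletons; taking $q = 2d+\ell$ and $m = d+\ell$ gives exactly $\nc(2d+\ell, d)$, i.e., $\nc(n,d)$ with $\ell = n-2d$. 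Hence $|\nc(n,d)| = \dim S^{(d,d,1^{n-2d})}$, and the proof is complete.

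The argument is essentially bookkeeping, so I do not anticipate a serious obstacle; the one point requiring care is making sure the parameters line up, namely that ``$d$ blocks, no singletons, $n$ points'' on the set-partition side corresponds under the cited bijections to the shape $(d,d,1^{n-2d})$ on the tableau side (in particular that $\ell = n - 2d \geq 0$ is exactly the condition for the pennant shape to be valid, matching the non-matching/genuine-pennant regime). Once the indices are matched, the proof is the single sentence: a linearly independent subset of $S^{(d,d,1^{n-2d})}$ of cardinality $\dim S^{(d,d,1^{n-2d})}$ is a basis.
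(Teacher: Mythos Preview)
Your proposal is correct and follows essentially the same approach as the paper: invoke Lemma~\ref{lemma:invariants_pennant} for containment, Lemma~\ref{lemma:independent} for linear independence, and the bijections of \cite[Propositions~2.1 and 2.3]{Pechenik} for the dimension count $|\nc(n,d)| = |\mathrm{SYT}(d,d,1^{n-2d})| = \dim S^{(d,d,1^{n-2d})}$. The only difference is cosmetic: you spell out the intermediate passage through $\inc^{2d+\ell}(d+\ell,d+\ell)$, whereas the paper cites the two propositions together as a single bijection.
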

\begin{proof}
By Lemma~\ref{lemma:invariants_pennant}, each $[\pi]$ is in the pennant Specht module $S^{(d,d,1^{n-2d})}$. By Lemma~\ref{lemma:independent}, they are linearly independent. By \cite[Propositions~2.1 and 2.3]{Pechenik}, there is a bijection between $\nc(n,d)$ and the set of standard Young tableaux of pennant shape $(d,d,1^{n-2d})$, the number of which is the dimension of $S^{(d,d,1^{n-2d})}$. Hence, the $[\pi]$ form a basis.
\end{proof}

For any permutation $w \in S_n$ and any $B \subseteq \{1, \dots, n\}$, define $w \cdot B = \{ w(b) : b \in B \}$.
For any set partition $\pi = \{B_1, \dots, B_d \} \in \setpart(n,d)$ (not necessarily noncrossing), let $w \cdot \pi$ be the set partition with blocks $\{ w \cdot B_1, w \cdot B_2, \dots, w \cdot B_d \}$.

\begin{proposition}
\label{prop:any_perm}
For any set partition $\pi \in \setpart(n,d)$ and any permutation $w \in S_n$, we have 
\[
w \cdot [\pi] = \sgn(w) [w \cdot \pi],
\]
where $\sgn(w)$ denotes the sign of the permutation $w$.
\end{proposition}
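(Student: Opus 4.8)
The plan is to prove the claim first for the generators of $S_n$ --- say the adjacent transpositions $s_k = (k,\ k{+}1)$ --- and then extend multiplicatively. Recall that $S_n$ acts on $\mathbb{C}[M]$ by permuting the columns of the matrix $M$; concretely, $w$ sends the indeterminate $x_{ab}$ to $x_{a\, w^{-1}(b)}$. I would start from Definition~\ref{def:polynomial}, writing $[\pi] = \sum_{T \in \rstab(\pi)} \sgn(T)\, \rspoly(T)$ with $\rspoly(T) = \prod_i M_{R_i(T)}^{\pi_i}$, and track how applying $w = s_k$ transforms both the minors $M_{R_i(T)}^{\pi_i}$ and the index sets $\pi_i$.

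The key observation is that $\rstab(\pi)$ and $\rstab(s_k \cdot \pi)$ are in a natural bijection: a jellyfish tableau $T$ for $\pi$ records, for each block, which rows of the tableau its elements occupy, and this ``row-placement data'' is unchanged when we relabel $k \leftrightarrow k{+}1$ throughout (the entries of each column just get permuted, since columns are sorted increasingly). So let $\bar T$ be the tableau for $s_k \cdot \pi$ obtained from $T$ by swapping the labels $k$ and $k{+}1$; this is a bijection $\rstab(\pi) \to \rstab(s_k \cdot \pi)$. Now there are two cases. If $k$ and $k{+}1$ lie in the same block $\pi_m$ of $\pi$, then $s_k \cdot \pi = \pi$ as a set partition, $\bar T = T$, the minor $M_{R_m(T)}^{\pi_m}$ is literally unchanged (as a set, the column set $\pi_m$ is the same and we are not permuting the matrix's row indices), and so $\rspoly$ is unchanged; but applying $w = s_k$ to the polynomial swaps two columns of the matrix $M$, which negates every minor in which exactly one of columns $k, k{+}1$ appears --- and here, since $k, k{+}1$ are both in $\pi_m$, both columns appear in the $m$th minor and in no other, so we pick up $(-1)^2 = 1$... wait, that gives $+1$, contradicting $\sgn(s_k) = -1$. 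The resolution is that swapping two adjacent columns of the submatrix $M_{R_m(T)}^{\pi_m}$ itself negates that minor once; so $w \cdot \rspoly(T) = -\rspoly(\bar T)$, and since $\bar T$ runs over $\rstab(s_k\cdot\pi) = \rstab(\pi)$ with $\sgn(\bar T) = \sgn(T)$, we get $w \cdot [\pi] = -[\pi] = \sgn(s_k)[s_k \cdot \pi]$. If instead $k \in \pi_a$ and $k{+}1 \in \pi_b$ with $a \neq b$, then applying $w = s_k$ swaps columns $k, k{+}1$ of $M$: column $k$ appears only in $M_{R_a(T)}^{\pi_a}$ and column $k{+}1$ only in $M_{R_b(T)}^{\pi_b}$, so after the swap these minors become $M_{R_a(T)}^{\pi_a \setminus k \cup \{k+1\}}$ and $M_{R_b(T)}^{\pi_b \setminus \{k+1\} \cup \{k\}}$ --- but now the columns may be out of order, and resorting each to increasing order costs a sign equal to the number of transpositions needed. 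The claim to verify is that the product of these two resorting signs, multiplied by $\sgn(T)$, equals $\sgn(\bar T)$; equivalently that $\sgn(T)/\sgn(\bar T)$ equals exactly the sign incurred when we relabel $k \leftrightarrow k{+}1$ and resort. This is a purely combinatorial inversion-count, and it is where Lemma~\ref{lem:row_col_swap} does the work: moving entries within columns and reordering columns does not change $\sgn$, so I can reduce to comparing the reading words of $T$ and $\bar T$, which differ only in that $k$ and $k{+}1$ have been swapped, changing the inversion count by exactly $1$ when $k, k{+}1$ are not in the same column --- matching the single sign from resorting.

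With the adjacent-transposition case established, I would finish by induction on the length of $w$: write $w = s_k w'$ with $\ell(w') = \ell(w) - 1$, apply the formula to $w'$ and then to $s_k$, using that $(s_k w') \cdot \pi = s_k \cdot (w' \cdot \pi)$ and $\sgn(s_k w') = \sgn(s_k)\sgn(w')$. One subtlety to handle carefully: the action is a left action, so $w \cdot [\pi] = s_k \cdot (w' \cdot [\pi]) = s_k \cdot (\sgn(w')[w' \cdot \pi]) = \sgn(w')\sgn(s_k)[s_k \cdot w' \cdot \pi] = \sgn(w)[w \cdot \pi]$, which is clean. A second subtlety: the singleton convention --- if $\pi$ has a singleton then $[\pi] = 0 = [w \cdot \pi]$ since $w \cdot \pi$ also has a singleton, so the identity holds trivially; and since our jellyfish-tableau bijection $T \mapsto \bar T$ is only defined for singleton-free $\pi$, this edge case must be dispatched separately at the outset.

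The main obstacle I anticipate is the careful bookkeeping in the $a \neq b$ case: precisely matching the sign $\sgn(T)/\sgn(\bar T)$ coming from the inversion statistic on tableaux against the product of column-resorting signs that arises from permuting the matrix columns. I expect this to reduce, via Lemma~\ref{lem:row_col_swap}, to a one-line inversion-parity argument (swapping two adjacent values in a word changes the number of inversions by an odd amount iff they are not forced equal by some other constraint --- here, iff they are in different columns of the tableau), but getting the reduction stated cleanly, and confirming the within-column case contributes the expected $+1$ rather than $-1$ to $\sgn(T)/\sgn(\bar T)$ so that the column swap of the single shared minor supplies the lone $-1$, will require attention. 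An alternative, possibly cleaner, route would be to bypass generators entirely: expand $\rspoly(T)$ using multilinearity/antisymmetry of the determinant to write each $M_{R_i(T)}^{\pi_i}$ as a signed sum over bijections between $R_i(T)$ and $\pi_i$, observe that $w$ permutes the ``column part'' $\pi_i$ of each such term while fixing the ``row part'' $R_i(T)$, and read off the global sign $\sgn(w)$ directly; I would mention this as a remark but carry out the generator-based proof as the primary argument since it keeps the sign analysis localized.
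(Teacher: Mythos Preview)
Your approach is essentially the same as the paper's: reduce to adjacent transpositions $s_k$, set up the label-swapping bijection $T\mapsto\bar T$ between $\rstab(\pi)$ and $\rstab(s_k\cdot\pi)$, split into the same-block and different-block cases, and extend to all $w$ by induction on length. One simplification the paper exploits that you can adopt: in the different-block case there are \emph{no} resorting signs, since replacing $k$ by $k{+}1$ (or vice versa) in an increasing list not already containing the other value keeps the list increasing; hence $s_k\cdot\rspoly(T)=\rspoly(\bar T)$ on the nose, and the entire $-1$ comes from the single inversion created in the reading word, giving $\sgn(\bar T)=-\sgn(T)$ directly.
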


\begin{proof}
Let $\pi=\{\pi_1,\ldots,\pi_d\}\in\setpart(n,d)$ and $w\in S_n$. We show the result for $w$ a simple transposition $s_i$. Then by induction, it is true for all $w$. 

Consider $s_i$ acting on $\pi$. There are two cases: Either $i$ and $i+1$ are in the same block of $\pi$ or else they are not. 
 
First, suppose they are not in the same block. The $\pi$ and $s_i \cdot \pi$ differ by exchanging that pair of elements between blocks. For $T\in\rstab(\pi)$, let $\tilde{T}$ denote the tableau obtained from $T$ by applying the permutation $s_i$, swapping the labels $i$ and $i+1$.
Observe that $\{ \tilde{T} : T \in \rstab(\pi) \} = \rstab(s_i \cdot \pi)$.
Note further that $\sgn(T) = -\sgn(\tilde{T})$ for all $T \in \rstab(\pi)$. 
Recall from Definition~\ref{def:polynomial} that \[[\pi] = \sum_{T\in \rstab(\pi)}\sgn(T) \; \rspoly(T).\]
Therefore, 
\[  s_i \cdot [\pi]  =   s_i \cdot \sum_{T\in \rstab(\pi)}\sgn(T) \; \rspoly(T) = \sum_{T\in \rstab(\pi)} \sgn(T) \; \rspoly(\tilde{T}) = \sum_{\tilde{T}\in \rstab(s_i \cdot \pi)} -\sgn(\tilde{T}) \; \rspoly(\tilde{T}) = -[s_i \cdot \pi],\] which yields the desired result in this case.

Now suppose $i$ and $i+1$ are in the same block of $\pi$. Then $\pi = s_i \cdot \pi$, so $[\pi] = [s_i \cdot \pi]$. On the other hand, $s_i \cdot [\pi]$ differs from $[\pi]$ by swapping two adjacent columns of exactly one determinant in each summand. Hence, $s_i \cdot [\pi] = - [\pi] = -[s_i\cdot\pi]$.
\end{proof}

\begin{corollary}
\label{prop:rotation}
Up to signs, the long cycle $c_n = n12\dots (n-1)$ acts by rotation and the long element $w_0$ acts by reflection.

Precisely, for any set partition $\pi \in \setpart_n$, we have 
\[
c_n \cdot [\pi] = (-1)^{n-1} [\mathtt{rot}(\pi)] \quad \text{and} \quad w_0 \cdot [\pi] = (-1)^{\binom{n}{2}} [\mathtt{refl}(\pi)],
\]
where $\mathtt{rot}$ denotes counterclockwise rotation by $(360/n)^\circ$ and $\mathtt{refl}$ denotes reflection across the diameter with endpoint halfway between vertices $n$ and $1$.
\end{corollary}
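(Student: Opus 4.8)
The plan is to derive Corollary~\ref{prop:rotation} as an immediate consequence of Proposition~\ref{prop:any_perm}, which already establishes the key equivariance $w \cdot [\pi] = \sgn(w)[w\cdot\pi]$ for \emph{arbitrary} permutations $w$. The only work remaining is to (1) identify the sign $\sgn(c_n)$ and $\sgn(w_0)$, and (2) check that the action of $c_n$ on blocks, namely $\pi \mapsto c_n \cdot \pi$, really does implement the combinatorial operation $\mathtt{rot}$ of rotating the planar diagram, and likewise that $w_0 \cdot \pi = \mathtt{refl}(\pi)$.

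For the signs: the long cycle $c_n = n12\dots(n-1)$ is an $n$-cycle, hence a product of $n-1$ transpositions, so $\sgn(c_n) = (-1)^{n-1}$. The long element $w_0 = n(n-1)\dots 21$ is the reversal permutation, whose number of inversions is $\binom{n}{2}$, so $\sgn(w_0) = (-1)^{\binom{n}{2}}$. For the combinatorial identification: with boundary vertices labeled $1, \dots, n$ cyclically around the disk, rotating the disk counterclockwise by $(360/n)^\circ$ carries the vertex labeled $i$ to the position formerly labeled $i+1$ (indices mod $n$), i.e. it relabels each block $B$ by $i \mapsto i+1$; under our convention $c_n$ sends $i \mapsto i+1$ for $i < n$ and $n \mapsto 1$, which is exactly this relabeling, so $c_n \cdot \pi = \mathtt{rot}(\pi)$ by definition of $\mathtt{rot}$. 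Similarly, $w_0$ sends $i \mapsto n+1-i$, which is the relabeling induced by the reflection of the disk fixing the point halfway between vertices $n$ and $1$ (and the point halfway between $\lfloor n/2\rfloor$ and $\lceil n/2 \rceil +1$... more precisely, the diameter through the midpoint of the arc from $n$ to $1$), so $w_0 \cdot \pi = \mathtt{refl}(\pi)$. Substituting these two facts into Proposition~\ref{prop:any_perm} gives the stated formulas directly.

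There is essentially no obstacle here; the corollary is purely a matter of unwinding definitions once Proposition~\ref{prop:any_perm} is in hand. The only point requiring a moment's care is matching the geometric conventions: one must confirm that the paper's $\mathtt{rot}$ is counterclockwise rotation by $(360/n)^\circ$ (stated) and check the direction so that it corresponds to $i \mapsto i+1$ rather than $i \mapsto i-1$, and similarly pin down which diameter $\mathtt{refl}$ uses so that it corresponds to $i \mapsto n+1-i$. Both are settled by the explicit descriptions given in the statement of the corollary itself. I would therefore write the proof as a short two-sentence argument: compute the two signs, observe the two block-relabelings coincide with $\mathtt{rot}$ and $\mathtt{refl}$, and invoke Proposition~\ref{prop:any_perm}.

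\begin{proof}
The long cycle $c_n$ is an $n$-cycle, hence a product of $n-1$ transpositions, so $\sgn(c_n) = (-1)^{n-1}$; the long element $w_0$ is the reversal permutation $i \mapsto n+1-i$, which has $\binom{n}{2}$ inversions, so $\sgn(w_0) = (-1)^{\binom{n}{2}}$. With the boundary vertices labeled $1, \dots, n$ cyclically, the counterclockwise rotation $\mathtt{rot}$ by $(360/n)^\circ$ acts on set partitions by the relabeling $i \mapsto i+1$ (indices modulo $n$), which is exactly the action of $c_n$; thus $c_n \cdot \pi = \mathtt{rot}(\pi)$. Likewise, reflection $\mathtt{refl}$ across the diameter with endpoint halfway between vertices $n$ and $1$ acts by the relabeling $i \mapsto n+1-i$, which is the action of $w_0$; thus $w_0 \cdot \pi = \mathtt{refl}(\pi)$. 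The claimed identities now follow immediately from Proposition~\ref{prop:any_perm}.
\end{proof}
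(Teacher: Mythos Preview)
Your proof is correct and follows exactly the same approach as the paper's own proof, which simply invokes Proposition~\ref{prop:any_perm} together with the sign computations $\sgn(c_n)=(-1)^{n-1}$ and $\sgn(w_0)=(-1)^{\binom{n}{2}}$. Your version is slightly more detailed in explicitly verifying that $c_n \cdot \pi = \mathtt{rot}(\pi)$ and $w_0 \cdot \pi = \mathtt{refl}(\pi)$, which the paper leaves implicit.
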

\begin{proof}
This follows from Proposition~\ref{prop:any_perm} by noting $\sgn(c_n)=(-1)^{n-1}$ and $\sgn(w_0)=(-1)^{\binom{n}{2}}$.
\end{proof}

This completes the proof of Theorem~\ref{thm:main}.

\section{Final remarks}\label{sec:final}

Using our polynomial representatives, one may easily obtain formulas for resolving crossing diagrams as linear combinations of noncrossing ones. In particular, we recover the ``skein'' relations of \cite{Rhoades:skein} (up to signs), as relations $\sigma_1$ through $\sigma_4$ in Figure~\ref{fig:skein_relations}. The relation $\sigma_5$ is included as an example of the more general sort of relations that may easily be extracted from Theorem~\ref{thm:5term}.

\begin{corollary}
Web invariants $[\pi]$ for noncrossing set partitions without singletons obey the ``skein'' relations illustrated in Figure~\ref{fig:skein_relations}.
\end{corollary}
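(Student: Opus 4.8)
The plan is to obtain each relation $\sigma_1,\dots,\sigma_5$ of Figure~\ref{fig:skein_relations} as a specialization of the five-term recurrence, in the multi-block form of Corollary~\ref{cor:twoblocks}. For each pictured relation I would read off from the diagram the four distinguished pieces of the boundary and identify them with the sets $A$, $B$, $I$, $J$ appearing in \eqref{eq:5term} (with $I$ and $J$ the two singletons, i.e.\ the two strands being re-routed), while the remaining blocks are frozen as $\pi_1,\dots,\pi_{d-2}$. Substituting these into Corollary~\ref{cor:twoblocks} produces a linear relation among the $[\cdot]$-invariants of five explicit set partitions, and the task is then to match those five set partitions with the diagrams drawn.

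I would handle $\sigma_5$ first, as it is the ``generic'' case: here one simply checks that the five diagrams in Figure~\ref{fig:skein_relations} are exactly $\{\dots,A\cup B,I\cup J\}$, $\{\dots,A\cup I,B\cup J\}$, $\{\dots,A\cup J,B\cup I\}$, $\{\dots,A\cup I\cup J,B\}$, and $\{\dots,A,B\cup I\cup J\}$ for the indicated choice of $A,B,I,J$, with no term vanishing. For $\sigma_1$ through $\sigma_4$ I would instead choose $A,B,I,J$ so that several of the five terms degenerate: by Definition~\ref{def:polynomial}, any term whose partition has a singleton block (e.g.\ when $B$ is forced to be a single vertex) is identically $0$, and a term whose partition is already noncrossing is one of the other basis diagrams in the picture. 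Discarding the zero terms and solving for the unique crossing diagram then yields exactly the stated skein relation. The signs $-1$ that appear in \eqref{eq:5term}, together with any discrepancy between our boundary labelling and that of \cite{Rhoades:skein}, are reconciled using Proposition~\ref{prop:any_perm} (equivalently Corollary~\ref{prop:rotation}), which pins down precisely how $[\pi]$ transforms under the relabelling permutations; this is the source of the ``up to signs'' in the statement.

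The main obstacle is bookkeeping rather than anything conceptual: one must verify diagram-by-diagram that the convex-hull pictures in Figure~\ref{fig:skein_relations} genuinely are the set partitions produced by the claimed substitution into Corollary~\ref{cor:twoblocks} (in particular that the crossing strand configurations correspond to the non-noncrossing terms of \eqref{eq:5term}), and then propagate the overall signs so that the normalization agrees with \cite{Rhoades:skein}. Since there are only finitely many small relations and each reduces to a transcription plus a short sign count via Proposition~\ref{prop:any_perm}, this is routine, but it must be carried out with care.
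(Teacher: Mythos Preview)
Your approach is essentially identical to the paper's: each $\sigma_i$ is read off from Corollary~\ref{cor:twoblocks} (the multi-block five-term recurrence) with an appropriate choice of $A,B,I,J$, and the shorter relations arise because terms with a singleton block vanish by Definition~\ref{def:polynomial}. Two small corrections: in the paper only $\sigma_1,\sigma_2,\sigma_3$ involve a degeneration (i.e.\ $|A|=1$ or $|B|=1$), while $\sigma_4$ and $\sigma_5$ are already genuine five-term identities; and the appeal to Proposition~\ref{prop:any_perm} for sign reconciliation with \cite{Rhoades:skein} is unnecessary here, since the corollary asserts only that the $[\pi]$ satisfy the relations drawn in Figure~\ref{fig:skein_relations}, whose signs come directly from \eqref{eq:5term}.
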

\begin{proof}
Each ``skein'' relation follows from applying the five-term recurrence (Theorem~\ref{thm:5term}) with $A=\{a_1,\ldots,a_k\}$, $B=\{b_1,\ldots, b_m\}$, $I=\{i\}$, and $J=\{i+1\}$ or $\{j\}$. In $\sigma_1$, $\sigma_2$, and $\sigma_3$, we have $k=1$ and/or $m=1$, so some terms of the five-term recurrence will have a singleton block and thus the corresponding invariants will be $0$.
\end{proof}

\begin{figure}[ht]
\includegraphics[width=\textwidth]{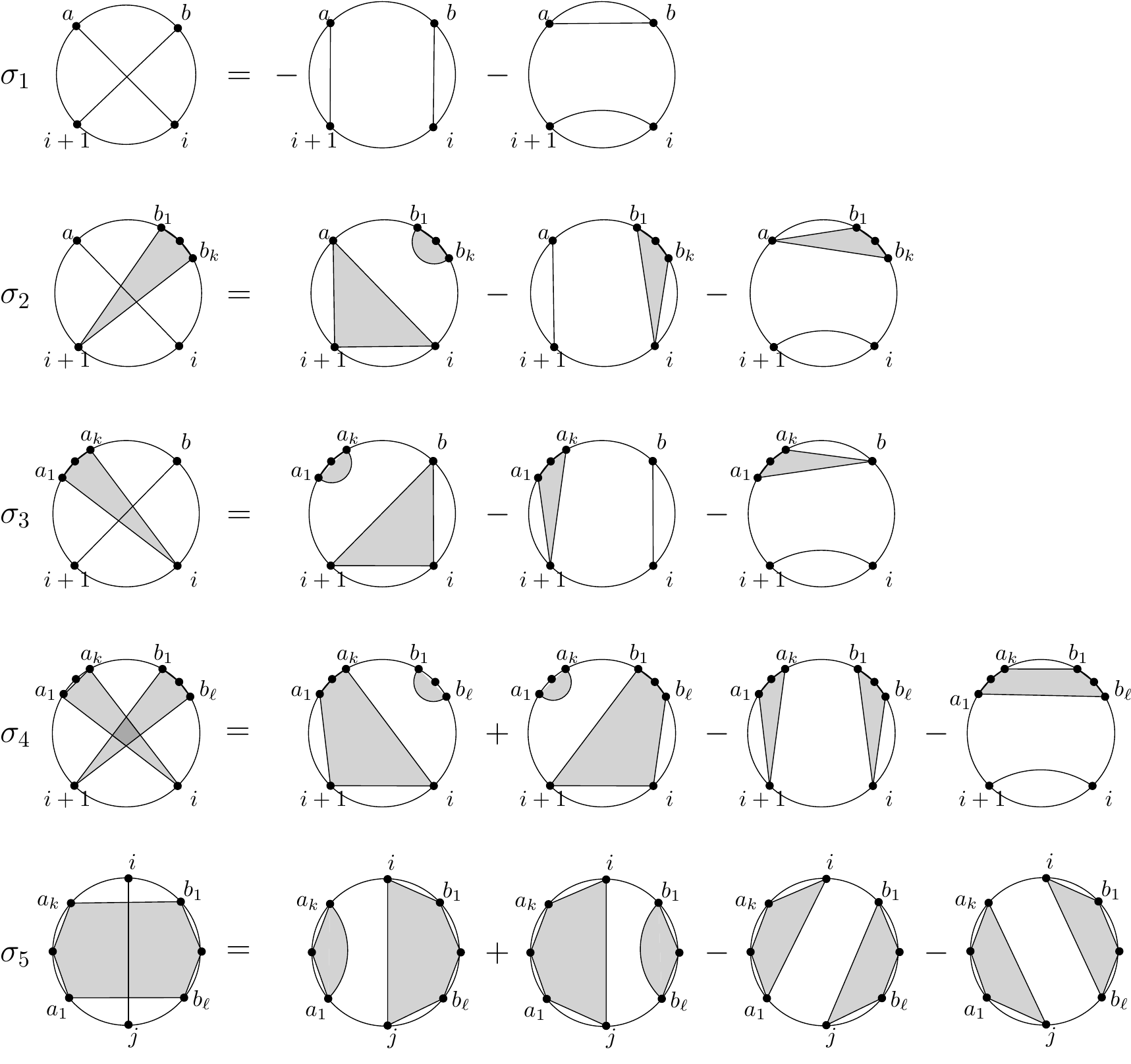}
\caption{A sampling of five relations on invariant polynomials for set partitions, expressing how to write the invariant for a set partition with crossings in terms of the noncrossing basis of Lemma~\ref{lemma:basis}}\label{fig:skein_relations}
\end{figure}

While relations $\sigma_1$ through $\sigma_4$ are sufficient to recursively expand $[\pi]$ for any (crossing) set partition in the noncrossing basis of Lemma~\ref{lemma:basis}, it is inefficient to use this method to expand invariants such as $[\pi]$ for $\pi$ the set partition on the left side of $\sigma_5$. One has to think about the sequence of simple transpositions that would render the diagram noncrossing, and then for each transposition in turn apply one of the rules $\sigma_1$ through $\sigma_4$. In this process, one obtains many terms, most of which cancel. A virtue of our approach is that more general relations such as $\sigma_5$ are easy to obtain.

With this representation theory in hand, it is also not hard to recover the cyclic sievings of \cite[Theorems~1.2 and 1.4]{Pechenik} and \cite[Theorem~7.2]{Reiner.Stanton.White} by similar arguments to \cite[$\mathsection$8]{Rhoades:skein}. The key is that we have identified the combinatorial objects in question (noncrossing partitions) with bases for the Specht modules, such that the long cycle $c_n = n12 \ldots (n-1)$ and the long element $w_0 = n(n-1) \ldots 1$ act (up to signs) by rotation and reflection (Corollary~\ref{prop:rotation}). Therefore, when we restrict attention to the actions of these particular symmetric group elements, the analysis becomes similar to the much easier problem of studying permutation representations. In particular, the characters of the symmetric group elements in the dihedral subgroup $\langle c_n, w_0 \rangle$ on Specht modules for pennant shapes coincide with fixed-point evaluations. One may compute these characters by applying Springer's theorem on regular elements.

The cyclic sievings of \cite[Theorems~1.2 and 1.4]{Pechenik} are explicitly stated in terms of $K$-promotion and $K$-evacuation of $2$-row rectangular packed increasing tableaux. They apply to rotation and reflection of noncrossing set partitions without singletons through the equivariant bijections recalled in Section~\ref{sec:tableaux}. The order of $K$-promotion on $k$-row rectangular tableaux $\inc^q(d^k)$ is generally much larger than $q$ \cite{Pechenik} whenever $k>3$. For $k=2$, we have seen that the order is $q$. In the only remaining case $k=3$, the order is unknown, but conjectured to be exactly $q$ \cite[Conjecture~4.12]{Dilks.Pechenik.Striker}. Resolving this conjecture would also establish that the order of rowmotion on the product $[a] \times [b] \times [3]$ of chain posets is $a+b+2$. We hope that our algebraic analysis of the $k=2$ case will provide additional tools for tackling these conjectures.

\begin{prob}\label{prob:3row}
Develop a bijection from $\inc^q(d,d,d)$ to a family of planar diagrams embedded in a disk with $q$ boundary vertices, which carries $K$-promotion to rotation and $K$-evacuation to reflection.
\end{prob}

A tricky aspect of Problem~\ref{prob:3row} is that the cardinality of $\inc^q(d,d,d)$ does not generally coincide with the dimension of a Specht module (due to the presence of large prime factors). Enumerations of these tableaux were given in \cite{Pressey.Stokke.Visentin,Morales.Pak.Panova:IV}.

To our knowledge, Theorem~\ref{thm:main} is the first development of web bases for a family of Lie groups $P$ of unbounded dimension. We expect this idea to have significant consequences beyond what we have so far explored. We hope that our theory can be extended to describe the algebraic structure of invariants for the quantum group deformation $U_q(\mathfrak{p})$. Assuming this can be accomplished, the diagramatics suggest it might be possible to thereby extract quantum invariants for spatial (hyper)graphs. We similarly hope that our constructions may help describe the topology of the Springer fiber for shape $(d,d,1^\ell)$, extending the work of \cite{Fung,Stroppel.Webster,Russell} for the case $\ell=0$. Web bases in the $\fsl_2$ and $\fsl_3$ cases are important in cluster algebra structure of Grassmannian coordinate rings \cite{Fomin.Pylyavskyy,Fraser}. While cluster algebras from coordinate rings of partial flag varieties (e.g., \cite{Geiss.Leclerc.Schroer}) have been comparatively less studied, we believe that the pieces of these coordinate rings spanned by our polynomials $[\pi]$ for noncrossing set partitions may be a fruitful place to look for extensions of the cluster theory from \cite{Fomin.Pylyavskyy}.

\section*{Acknowledgements}
We thank Julianna Tymoczko for related discussions in the early stages of this project. We're also grateful to Jesse Kim, Brendon Rhoades, and Joshua Swanson for discussions about \cite{Kim.Rhoades}, and to Vic Reiner and Melissa Sherman-Bennett for bringing \cite{Armstrong.Reiner.Rhoades} and \cite{Geiss.Leclerc.Schroer}, respectively, to our attention. OP is grateful for very helpful discussions and correspondence with Jake Levinson and David E Speyer, and to Brendon Rhoades for pointing out an error in our original description of the Lie groups $P$. We thank the anonymous referee for numerous helpful suggestions that streamlined several technical arguments.

OP acknowledges support from NSERC Discovery Grant RGPIN-2021-02391 and Launch Supplement DGECR-2021-00010. JS acknowledges support from Simons Foundation/SFARI grant (527204, JS).

\bibliographystyle{amsalpha}
\bibliography{increasingwebs}

\end{document}